\newtheorem{theorem}{Theorem}[section]
\newtheorem{remark}[theorem]{Remark}
\newtheorem{lemma}[theorem]{Lemma}
\newtheorem{definition}[theorem]{Definition}
\numberwithin{equation}{section}
\DeclareMathOperator{\ext}{ext\,\!}
\DeclareMathOperator{\sign}{sign\,\!}
\newcommand{\D}{\mathbb{D}}
\begin{document}

\title[On the constants in the polynomial BH inequality]{Sharp values for the constants in the polynomial Bohnenblust-Hille inequality}

\author[Jim\'{e}nez]{P. Jim\'{e}nez-Rodr\'{\i}guez}
\address{Departamento de An\'{a}lisis Matem\'{a}tico,\newline\indent Facultad de Ciencias Matem\'{a}ticas, \newline\indent Plaza de Ciencias 3, \newline\indent Universidad Complutense de Madrid,\newline\indent Madrid, 28040, Spain.}
\email{pablo$\_$jimenez@mat.ucm.es}

\author[Mu\~{n}oz]{G.A. Mu\~{n}oz-Fern\'{a}ndez}
\address{Departamento de An\'{a}lisis Matem\'{a}tico,\newline\indent Facultad de Ciencias Matem\'{a}ticas, \newline\indent Plaza de Ciencias 3, \newline\indent Universidad Complutense de Madrid,\newline\indent Madrid, 28040, Spain.}
\email{gustavo$\_$fernandez@mat.ucm.es}

\author[Murillo]{M. Murillo-Arcila}
\address{Instituto Universitario de Matem\'{a}tica Pura y Aplicada,\newline\indent Universitat Polit\`{e}cnica de Val\`{e}ncia, \newline\indent 46022, Val\`{e}ncia, Spain.}
\email{mamuar1@posgrado.upv.es}

\author[Seoane]{J.B. Seoane-Sep\'{u}lveda}
\address{Departamento de An\'{a}lisis Matem\'{a}tico,\newline\indent Facultad de Ciencias Matem\'{a}ticas, \newline\indent Plaza de Ciencias 3, \newline\indent Universidad Complutense de Madrid,\newline\indent Madrid, 28040, Spain.}
\email{jseoane@mat.ucm.es}

\thanks{G.A. Mu\~{n}oz-Fern\'{a}ndez and J.B. Seoane-Sep\'{u}lveda were supported by MTM2012-34341. M. Murillo-Arcila was supported by a grant of the FPU program of MEC, and by MTM2013-47093.}

\subjclass[2010]{46G25, 47L22, 47H60.}

\keywords{Bohnenblust--Hille constants, Absolutely summing operators, Quantum Information Theory}

\maketitle

\begin{abstract}
In this paper we prove that the complex polynomial Bohnenblust-Hille constant for $2$-homogeneous polynomials in ${\mathbb C}^2$ is exactly $\sqrt[4]{\frac{3}{2}}$. We also give the exact value of the real polynomial Bohnenblust-Hille constant for $2$-homogeneous polynomials in ${\mathbb R}^2$. Finally, we provide lower estimates for the real polynomial Bohnenblust-Hille constant for polynomials in ${\mathbb R}^2$ of higher degrees.
\end{abstract}


\section{Preliminaries: What you need to know}
Any homogeneous polynomial in ${\mathbb{K}}^{n}$ (${\mathbb{K}}={\mathbb{R}}$ or ${\mathbb{C}}$) of degree $m\in{\mathbb N}$ can be written as
    \begin{equation}\label{equ:hom_pol}
    P(x)={\sum\limits_{\left\vert \alpha\right\vert =m}}a_{\alpha}x^{\alpha},
    \end{equation}
where $x=(x_{1},\ldots,x_{n})\in{\mathbb{K}}^{n}$, $\alpha=(\alpha_{1}%
,\ldots,\alpha_{n})\in({\mathbb{N}}\cup\{0\})^{n}$, $|\alpha|=\alpha
_{1}+\cdots+\alpha_{n}$, $x^{\alpha}=x_{1}^{\alpha_{1}}\cdots x_{n}%
^{\alpha_{n}}$ and $a_{\alpha}\in{\mathbb K}$. Here, ${\mathcal P}(^m{\mathbb K}^n)$ stands for the finite dimension linear space of all the homogeneous polynomials of degree $m$ on ${\mathbb K}^n$.

If $\|\cdot\|$ is a norm on ${\mathbb K}^n$,
then the formula
    $$
    \|P\|:=\sup\{|P(x)|:x\in{\mathsf B}_X\},
    $$
for all $P\in {\mathcal P}(^m{\mathbb K}^n)$, where ${\mathsf B}_X$ is the unit ball of the Banach space $X=({\mathbb K}^n,\|\cdot\|)$, defines a norm in ${\mathcal P}(^m{\mathbb K}^n)$ usually called polynomial norm. The space ${\mathcal P}(^m{\mathbb K}^n)$ endowed with the polynomial norm induced by $X$ is denoted by ${\mathcal P}(^mX)$.

Other norms customarily used in ${\mathcal P}(^m{\mathbb K}^n)$ besides the polynomial norm are the $\ell_{p}$ norms of the coefficients. Namely, if $P$ is as in \eqref{equ:hom_pol} and $p\geq 1$, then
    $$
    |P|_p:=\left(\sum_{|\alpha|=m}|a_\alpha|^p\right)^\frac{1}{p},
    $$
defines another norm in ${\mathcal P}(^m{\mathbb K}^n)$. It is important to point out that although the polynomial norm is, most of the times, very difficult to compute, the $\ell_p$ norm of the coefficients is fairly easy to obtain. Since ${\mathcal P}(^m{\mathbb K}^n)$ is finite dimensional,
the polynomial norm $\|\cdot\|$ and the
$\ell_{p}$ norm $|\cdot|_{p}$ ($p\geq 1$) are equivalent, and therefore there
exist constants $k(m,n),\ K(m,n)>0$ such that
\begin{equation}
\label{equ:equiv}k(m,n)|P|_{p}\leq\|P\|\leq K(m,n)|P|_{p},
\end{equation}
for all $P\in{\mathcal{P}}(^{m}{\mathbb K}^n)$. The latter inequalities may provide a good
estimate on $\|P\|$ as long as we know the exact value of the best possible
constants $k(m,n)$ and $K(m,n)$ appearing in \eqref{equ:equiv}.

The problem presented above is an extension of the the well known
polynomial Bohnen\-blust-Hille inequality (polynomial BH inequality for short). It was proved in \cite{bh} that there
exists a constant $D_{m}>0$ such that for every $P\in{\mathcal{P}}(^{m}%
\ell_{\infty}^{n}({\mathbb K}))$ we have
\begin{equation}
|P|_{\frac{2m}{m+1}}\leq D_{m}\Vert P\Vert,\label{equ:BH}
\end{equation}
where $\ell_{\infty}^{n}({\mathbb R})$ and $\ell_{\infty}^{n}({\mathbb C})$ are, respectively, the real and complex versions of $\ell_{\infty}^{n}$.
Observe that \eqref{equ:BH} coincides with the first inequality in
\eqref{equ:equiv} for $p=\frac{2m}{m+1}$ except for the fact that $D_{m}$ in
\eqref{equ:BH} can be chosen in such a way that it is independent from the
dimension $n$. Actually Bohnenblust and Hille showed that $\frac{2m}{m+1}$ is
optimal in \eqref{equ:BH} in the sense that for $p<\frac{2m}{m+1}$, any
constant $D$ fitting in the inequality
\[
|P|_{p}\leq D\Vert P\Vert,
\]
for all $P\in{\mathcal{P}}(^{m}\ell_{\infty}^{n}({\mathbb K}))$ depends necessarily on $n$.

The best constants in \eqref{equ:BH} depend considerably on whether we consider the real or the complex version of $\ell_{\infty}^{n}$, which motivates the following definition:

\begin{definition}
The polynomial Bohnenblus-Hille constant for polynomials of degree $m$ is defined as
\[
D_{{\mathbb{K}},m}:=\inf\left\{  D>0:\text{$|P|_{\frac{2m}{m+1}}\leq
D\Vert P\Vert$, for all $n\in{\mathbb N}$ and $P\in{\mathcal{P}}(^{m}\ell_{\infty}^{n}({\mathbb K}))$}\right\}  .
\]
If we restrict attention to a certain subset $E$ of ${\mathcal P}(^m\ell_\infty^n({\mathbb K}))$ for some $n\in{\mathbb N}$, then we define
    $$
    D_{{\mathbb{K}},m}(E):=\inf\left\{  D>0:\text{$|P|_{\frac{2m}{m+1}}\leq
    D\Vert P\Vert$ for all $P\in E$}\right\}  .
    $$
For simplicity we will often use the notation $D_{{\mathbb{K}},m}(n)$ instead of $D_{{\mathbb{K}},m}({\mathcal{P}}(^{m}\ell_{\infty}^{n}({\mathbb K})))$. Note that
    $$
    1\leq D_{{\mathbb{K}},m}(n)\leq D_{{\mathbb{K}},m},
    $$
for all $m,n\in{\mathbb N}$.
\end{definition}

A good idea of the asymptotic growth of the constants $D_{{\mathbb{K}},m}$ and $D_{{\mathbb{K}},m}(n)$ is provided by the following definition:

\begin{definition}
The asymptotic hypercontractivity constant of the
polynomial BH inequality is
    $$
    H_{\mathbb{K},\infty}:=\limsup_{m} \sqrt[m]{D_{{\mathbb{K}},m}}.
    $$
Similarly, if we restrict attention to polynomials in $n$ variables then we define
    $$
    H_{\mathbb{K},\infty}(n):=\limsup_{m} \sqrt[m]{D_{{\mathbb{K}},m}(n)}.
    $$
Of course $1\leq H_{\mathbb{K},\infty}(n)\leq H_{\mathbb{K},\infty}$, for all $n\in{\mathbb N}$.
\end{definition}

It was shown in \cite{annals2011} that the complex polynomial
Bohnenblust--Hille inequality is, at most, hypercontractive. In \cite{bayart} the estimate on $D_{{\mathbb{C}},m}$ was improved. In fact the authors show that for every $B>1$ there exists $A>0$ such that $D_{{\mathbb{C}},m}\leq Ae^{B\sqrt{m\log m}}$, from which it follows that $H_{\mathbb{C},\infty}(n)=H_{\mathbb{C},\infty}=1$, for all $n\in{\mathbb N}$. For the real case, it has been recently proved in \cite{LAA2015} that $H_{\mathbb{R},\infty}=2$. However, not many exact values of $D_{{\mathbb{K}},m}(n)$ are known so far. This paper is devoted to calculate, explicitly or numerically some values of these constants.

This paper is arranged in two main sections. In Section \ref{sec2}, we employ some results on the geometry of spaces of polynomials in order to provide the exact value of $D_{\mathbb{C},2}(2)$. In Section \ref{sec3} we use a similar technique to find the exact value of $D_{\mathbb{R},2}(2)$. We also provide lower estimates for $D_{{\mathbb R},m}(2)$ and $H_{{\mathbb R},\infty}(2)$ by means of numerical calculus.

The polynomial Bohnenblust--Hille inequality has important applications in different fields of Mathematics and Physics and has been studied in depth by many authors since a multilinear version of the Bohnenblust--Hille inequality was proved in 1931 (see \cite{bayart,Blas, bom, dgm, ff, dmp, defant55, bh,Boas,defant2,DD,fr,monta,Nun2,seip3, Nu3,diniz2,lama11,addqq,psseo}) and the references therein.

\section{The exact value of $D_{\mathbb{C},2}(2)$}\label{sec2}

Throughout this section we will often identify any two-variable polynomial $az^2+bwz+cw^2$ or any one-variable polynomial $a\lambda^2+b\lambda +c$, for $a,b,c\in{\mathbb K}$, with the vector $(a,b,c)\in{\mathbb K}^3$. Also, we use the standard notation $\|az^2+bwz+cw^2\|_{\D}$ for the supremum of $|az^2+bwz+cw^2|$ for $z,w$ in the unit disk ${\mathbb D}$ of ${\mathbb C}$. Similarly, $\|a\lambda^2+b\lambda +c\|_{\mathbb D}$ stands for the supremum of $|a\lambda^2+b\lambda +c|$ for $\lambda\in{\mathbb D}$. Observe that
    $$
    \|az^2+bwz+cw^2\|_{\D} =\|a\lambda^2+b\lambda+c\|_{\mathbb D}=\max_{|\lambda|=1}|a\lambda^2+b\lambda+c|,
    $$
being the last of the latter equalities due to the Maximum Modulus Principle.

The main result of this section depends upon the following lemma, which is of independent interest.

\begin{lemma}\label{lem:Pablito}
Let $a,b,c \in \mathbb{C}$. There exist $a', b', c'\in \mathbb{R}$ such that $$\|az^2+bwz+cw^2\|_{\D}\geq\|a' z^2+b' zw+c'w^2\|_{\D} \quad \mbox{ and } \quad \|(a,b,c)\|_{4 \over 3}=\|(a',b',c')\|_{4 \over 3}.$$
\end{lemma}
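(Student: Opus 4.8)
The plan is to use the reduction $\|az^2+bwz+cw^2\|_{\D}=\max_{|\lambda|=1}|a\lambda^2+b\lambda+c|$ noted above, to normalize the coefficients by the two evident norm–preserving symmetries, and then to control the single phase that survives. First I would note that replacing $\lambda$ by $e^{i\psi}\lambda$ and multiplying the polynomial by a unimodular constant $e^{i\phi}$ changes neither $\max_{|\lambda|=1}|a\lambda^2+b\lambda+c|$ nor the moduli $|a|,|b|,|c|$, hence it preserves $\|(a,b,c)\|_{4/3}$. With the two free parameters $\phi,\psi$ I can make $a$ and $c$ real and nonnegative, leaving $b=re^{i\theta}$ with $r=|b|\ge 0$. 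Since conjugation ($\lambda\mapsto\overline\lambda$) and $\lambda\mapsto-\lambda$ leave the norm unchanged, the quantity $\Phi(\theta):=\max_{|\lambda|=1}|a\lambda^2+re^{i\theta}\lambda+c|$ is even and symmetric about $\pi/2$, so it is enough to work with $\theta\in[0,\tfrac{\pi}{2}]$.

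With $a,c,r\ge 0$ fixed, the triangle inequality gives $\Phi(\theta)\le a+r+c=\Phi(0)$, so the all–positive configuration is a trivial maximizer; the real content of the lemma is that the opposite extreme $\theta=\tfrac{\pi}{2}$ is the \emph{minimizer}. The candidate real polynomial is $a\lambda^2+r\lambda-c$: the substitution $\lambda\mapsto i\lambda$ shows $\|a\lambda^2+ir\lambda+c\|_{\D}=\|a\lambda^2+r\lambda-c\|_{\D}=\Phi(\tfrac{\pi}{2})$, and this polynomial has coefficient moduli $(a,r,c)$, identical to those of the original, so that $\|(a,b,c)\|_{4/3}=\|(a,r,-c)\|_{4/3}$ holds for free. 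Everything therefore reduces to the single inequality $\Phi(\theta)\ge\Phi(\tfrac{\pi}{2})$ for $\theta\in[0,\tfrac{\pi}{2}]$.

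To establish this I would write, with $x=\cos t$ and $y=\sin t$,
\[
|a\lambda^2+re^{i\theta}\lambda+c|^2=(a-c)^2+r^2+4ac\,x^2+2r(a+c)\cos\theta\,x+2r(a-c)\sin\theta\,y,\qquad \lambda=e^{it},
\]
so that $\Phi(\theta)^2$ is the maximum of the right–hand side over the circle $x^2+y^2=1$, and then show that $\Phi(\theta)^2$ is non–increasing on $[0,\tfrac{\pi}{2}]$. By the envelope theorem its derivative is $2r\bigl[-(a+c)\sin\theta\,x^\ast+(a-c)\cos\theta\,y^\ast\bigr]$ at a maximizing pair $(x^\ast,y^\ast)$, so the goal becomes $(a+c)\sin\theta\,x^\ast\ge(a-c)\cos\theta\,y^\ast$. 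The Lagrange conditions for the constrained maximum, with multiplier $\mu$, read $r(a+c)\cos\theta=(\mu-4ac)x^\ast$ and $r(a-c)\sin\theta=\mu y^\ast$; substituting them, the inequality turns into $\tfrac{(a+c)^2}{\mu-4ac}\ge\tfrac{(a-c)^2}{\mu}$, i.e. $4ac\,\mu\ge-4ac\,(a-c)^2$, which is clear provided $\mu>4ac$.

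I expect the main obstacle to be precisely the positivity $\mu>4ac$, that is, locating the maximizer well enough to sign the derivative. I would settle it as follows: comparing the objective at $x$ and $-x$ gives $F(x)-F(-x)=4r(a+c)\cos\theta\,x\ge 0$ for $x\ge 0$, so the global maximum occurs at some $x^\ast\ge 0$; for $\theta\in(0,\tfrac{\pi}{2})$ one has $(a+c)\cos\theta>0$, which forces $x^\ast>0$ in the relation $r(a+c)\cos\theta=(\mu-4ac)x^\ast$ and hence $\mu>4ac$. The remaining points are routine and handled separately: the degenerate cases $a=c$ (where $\Phi(\theta)^2=4a^2+r^2+4ar|\cos\theta|$ is manifestly smallest at $\theta=\tfrac{\pi}{2}$) and $ac=0$ (where $\Phi$ is constant in $\theta$), boundary maximizers with $y^\ast=0$, and the one–sided differentiability of the max–function $\Phi^2$. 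Once $\Phi(\theta)\ge\Phi(\tfrac{\pi}{2})=\|a\lambda^2+r\lambda-c\|_{\D}$ is proved, the choice $(a',b',c')=(a,r,-c)$ finishes the lemma.
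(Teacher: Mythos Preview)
Your argument is correct and takes a genuinely different route from the paper's. The paper normalizes (as you do) to $a,c\ge 0$, but then splits into three cases according to the ordering of $a$, $|b|$, $c$; in each case it manufactures an ad hoc real triple $(a',b',c')$ (often of the symmetric form $a'=-c'$, with $b'$ chosen so that only $\|\cdot\|_{4/3}$ is preserved), evaluates $|a\lambda^2+b\lambda+c|$ at a few specific test points $\lambda$, and reduces the comparison to several two--variable inequalities which are then verified by elementary calculus. Your approach instead keeps the moduli $(|a|,|b|,|c|)$ fixed, views the disk norm as a function $\Phi(\theta)$ of the single remaining phase $\theta=\arg b$, and shows via the Lagrange/Danskin computation that $\Phi$ is non--increasing on $[0,\tfrac\pi2]$, so the minimum is at $\theta=\tfrac\pi2$, which corresponds to the real polynomial $a\lambda^2+|b|\lambda-c$.

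What you gain is twofold: the proof is considerably shorter and free of numerical case--checking, and the conclusion is strictly stronger, since $(a',b',c')=(a,|b|,-c)$ has the \emph{same moduli} as $(a,b,c)$, hence the same $\ell_p$--norm for every $p$, not only $p=\tfrac43$. What the paper's approach buys is that it never needs the envelope/Danskin differentiability statement for a max over a compact family; if you want your write--up to be fully self--contained, it is worth recording explicitly that $\theta\mapsto\Phi(\theta)^2$ is a pointwise maximum of a compact family of functions that are $C^1$ in $\theta$, so Danskin's theorem gives one--sided derivatives equal to the extremal values of $\partial_\theta F_\theta(x^\ast,y^\ast)$ over the (possibly non--unique) maximizers, and your Lagrange computation shows this partial is $\le 0$ at every maximizer. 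With that sentence in place, and the degenerate cases $r=0$, $a+c=0$ and $a=c$ disposed of as you indicate, the proof is complete.
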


\begin{proof}
If we perform the change of variables 
    $$
    z\mapsto ze^{-\frac{i\arg(a)}{2}}\quad \text{and}\quad w\mapsto we^{-\frac{i\arg(c)}{2}},
    $$
in $\|az^2+bzw+cw^2\|_{\D}$, we can assume (without loss of generality) that $a,c\geq 0$. We can also assume that $a\geq c$ by swapping $z$ and $w$. We have:
$$
\begin{aligned}
|a\lambda^2+b\lambda+c|^2&=\left( a\lambda^2+b\lambda+c\right)\left( a\overline{\lambda^2}+\overline{b\lambda}+c\right) \\
&=a^2+ac\lambda^2+a\overline{b}\lambda+ac\overline{\lambda^2}+c^2+c\overline{b\lambda}+ab\overline{\lambda}+bc\lambda+|b|^2 \\
&=a^2+c^2+|b|^2+2\left[ ac\text{Re}(\lambda^2)+a\text{Re}(\overline{b}\lambda)+c\text{Re}(b\lambda)\right] \\
&=a^2+c^2+|b|^2+2\left[ ac\text{Re}(\lambda^2)+(a+c)\text{Re}(b)\text{Re}(\lambda)+(a-c)\text{Im}(b)\text{Im}(\lambda)\right].
\end{aligned}
$$
Similarly, if $a', b', c'$ are real numbers, then:
$$
|a'\lambda^2+b'\lambda+c'|^2=a'^2+c'^2+|b'|^2+2\left[ a'c'\text{Re}(\lambda^2)+(a'+c')b'\text{Re}(\lambda)\right].
$$
\begin{enumerate}

\item Assume first $a\geq c \geq |b|$. Then, choose
$$
a'={(c^{4 \over 3}+|b|^{4 \over 3})^{3 \over 4} \over 2^{3 \over 4}}, \, c'=-a', \, b'=a.
$$
Then, $\|(a,b,c)\|_{4 \over 3}=\|(a',b',c')\|_{4 \over 3}$. On the other hand,
$$
a'^2+c'^2+|b'|^2+2\left[ a'c'\text{Re}(\lambda^2)+(a'+c')\text{Re}(\lambda)\right]=2a'^2+a^2-2a'^2 \text{Re}(\lambda^2),
$$
so that it is easy to see
$$
\|(a',b',c')\|^2_{\D}=4a'^2+a^2=\sqrt{2}(c^{4 \over 3}+|b|^{4 \over 3})^{3 \over 2}+a^2.
$$
Also, giving the value $\lambda=1$ (if $\text{Re}(b) \geq 0$) or $\lambda=-1$ (if Re$(b) \leq 0$), we can see that
$$
\|az^2+cw^2+bzw\|_{\D}^2 \geq a^2+c^2+|b|^2+2ac.
$$
Now, we want
$$
\sqrt{2}(c^{4 \over 3}+|b|^{4 \over 3})^{3 \over 2}+a^2 \leq a^2+c^2+|b|^2+2ac,
$$
that is,
$$
{c^2+|b|^2+2ac \over \sqrt{2}(c^{4 \over 3}+|b|^{4 \over 3})^{3 \over 2}} \geq 1.
$$
Divide both, numerator and denominator, by $a^2$ in order to convert the problem in having to achieve
$$
{x^2+y^2+2x \over \sqrt{2}(x^{4 \over 3}+y^{4 \over 3})^{3 \over 2}} \geq 1,
$$
for $0 \leq y \leq x \leq 1$.

\item Assume next $a \geq |b| \geq c$. In this seciond part of the proof we shall need to employ a couple of real valued functions that will come in handy to achieve our purpose. Let us first focus our attention on the choice of the constants $a', \, b', \, c'$, as before,
$$
a'={\left(|c|^{4 \over 3}+|b|^{4 \over 3}\right)^{3 \over 4} \over 2^{3 \over 4}}, \, c'=-a', \, b'=a.
$$
In this case, choose
$$
\lambda=\text{sign(Re($b$))}\sqrt{1 \over 2}+i\text{sign(Im($b$))}\sqrt{1 \over 2}.
$$
Then,

$$
\begin{aligned}
\|(a,b,c)\|_{\D} & \geq a^2+c^2+|b|^2+2\left[\sqrt{1 \over 2}(a+c)|\text{Re}(b)| + \sqrt{1 \over 2}(a-c) |\text{Im}(b)|\right]\\
&\geq a^2+c^2+|b|^2+\sqrt{2}|b|(a-c).
\end{aligned}
$$
Hence, we will achieve the desired result if we can guarantee
$$
\sqrt{2}\left(|c|^{4 \over 3}+|b|^{4 \over 3}\right)^{3 \over 2}+a^2 \leq a^2+c^2+|b|^2+\sqrt{2}(a-c)|b|,
$$
in other words,
$$
1 \leq \Phi_1(x,y):={x^2+y^2+\sqrt{2}(1-x)y \over \sqrt{2}\left(x^{4 \over 3}+y^{4 \over 3}\right)^{3 \over 2}},
$$
where $0 \leq x \leq y \leq 1$. \\

Let us focus now in another choice of constants $a', \, b', \, c'$:
$$
a'={\left(|a|^{4 \over 3}+|c|^{4 \over 3}+|b|^{4 \over 3}\right)^{3 \over 4} \over (2+k^{4/3})^{3 \over 4}}, \, c'=-a', \, b'=ka,
$$
where $k$ has been chosen so that $\|(a,b,c)\|_{4 \over 3}=\|(a',b',c')\|_{4 \over 3}$. It can be proved that $k\approx 2.828$.
Still giving the value $\lambda=\text{sign(Re($b$))}$
$$
\|(a,b,c)\|_{\D} \geq a^2+c^2+|b|^2+2ac,
$$
and again we guarantee that we achieve what we are searching for if we get
$$
4a'^2+b'^2={\left(|a|^{4 \over 3}+|c|^{4 \over 3}+|b|^{4 \over 3}\right)^{3 \over 2} \over (2+k^{4/3})^{3 \over 2}}(4+k^2) \leq a^2+c^2+|b|^2+2ac,
$$
in other words,
$$
1 \leq \Psi_1(x,y):={\left(1+x^2+y^2+2yx\right)(2+k^{4/3})^{3 \over 2} \over \left(4+k^2 \right) \left(y^{4 \over 3}+x^{4 \over 3}+1\right)^{3 \over 2}},
$$
with $0 \leq x \leq y \leq 1$. \\
The reader can check using elementary calculus that, if $$H(x,y) := \max\{\Phi_1(x,y), \, \Psi_1(x,y) \},$$ then
$$
1 \leq H(x,y) \text{ for every } \quad 0 \leq x \leq y \leq 1.
$$

\item Assume finally $|b| \geq a \geq c$. Then, we may choose
$$
a'={\left(|a|^{4 \over 3}+|c|^{4 \over 3}+|b|^{4 \over 3}\right)^{3 \over 4} \over (2+k^{4/3})^{3 \over 4}}, \, c'=-a', \, b'=ka,
$$
where $k$ is chosen as in the previous case. For $\lambda=\text{sign(Re($b$))}$, we still need to make sure that
$$
1 \leq \Phi_2(x,y):={(1+x^2+y^2+2xy)(2+k^{4/3})^{3 \over 2} \over (4+k^2)(x^{4/3}+y^{4/3}+1)^{3 \over 2}}.
$$
For $\lambda=\text{sign(Re($b$))}\sqrt{1 \over 2}+i\text{sign(Im($b$))}\sqrt{1 \over 2}$, we need to make sure that
$$
1 \leq \Psi_2(x,y):={(1+x^2+y^2+\sqrt{2}(y-x)(2+k^{4/3})^{3 \over 2} \over (4+k^2)(x^{4/3}+y^{4/3}+1)^{3 \over 2}}.
$$
Next, choose
$$
a'={(|a|^{4/3}+|c|^{4/3})^{3/4} \over 2^{3/4}}, \quad c'=-a' \quad \text{and} \quad b'=|b|,
$$
such that
$$
\|(a',b',c')\|^2_{\D}=4{(|a|^{4/3}+|c|^{4/3})^{3/2} \over 2^{3/2}}+|b|^2,
$$
and
$$
\lambda=\text{sign(Re($b$))}\sqrt{1-\left({a-c \over |b|}\right)^2}+i\, \text{sign(Im($b$))}{a-c \over |b|}.
$$
In that case,
$$
\begin{aligned}
\|(a,b,c)\|_{\D} & \geq a^2+c^2+|b|^2+2\left[ac\text{Re($\lambda^2$)}+(a+c)\text{Re($b$)Re($\lambda$)}+(a-c)\text{Im($b$)Im($\lambda$)}\right]   \\
& =a^2+c^2+|b|^2+2\left[ac\left(1-2\left({a-c \over |b|}\right)^2\right)+(a+c)|\text{Re($b$)}|\sqrt{1-\left({a-c \over |b|}\right)^2}\right. \\
& \left. \quad +(a-c)|\text{Im($b$)}|{a-c \over |b|} \right] .
\end{aligned}
$$
Assume first $|\text{Im($b$)}|\geq {\sqrt{2} \over 2}$. Then,
$$
\begin{aligned}
\|(a,b,c)\|_{\D} & \geq a^2+c^2+|b|^2\\
& + 2\left[ac\left(1-2\left({a-c \over |b|}\right)^2\right)+(a-c){\sqrt{2} \over 2}|b|{a-c \over |b|}\right].
\end{aligned}
$$

Hence, we will achieve what we are searching for if we can assure that
$$
1 \leq \Omega_2^{(1)}(x,y):={x^2+y^2+2xy(1-2(y-x)^2)+\sqrt{2}(y-x)^2 \over \sqrt{2}(x^{4/3}+y^{4/3})^{3 \over 2}}.
$$
On in on, we need to prove
$$
1 \leq \max\{\Phi_2(x,y), \, \Psi_2(x,y), \, \Omega_2^{(1)}(x,y) \},
$$
for $0 \leq x \leq y \leq 1$. This can be done by means of elementary calculus, and we leave it as an exercise to the reader.\\

\noindent On the other hand if, instead, we have $|\text{Re($b$)}|\geq {\sqrt{2} \over 2}$, then

$$
\begin{aligned}
\|(a,b,c)\|_{\D} & \geq a^2+c^2+|b|^2\\ & +2\left[ac\left(1-2\left({a-c \over |b|}\right)^2\right)+(a+c){\sqrt{2} \over 2}|b|\sqrt{1-\left({a-c \over |b|}\right)^2}\right],
\end{aligned}
$$

and (in this case) we will be working with the condition
$$
1 \leq \Omega_2^{(2)}(x,y):={x^2+y^2+2xy(1-2(y-x)^2)+\sqrt{2}(y+x)\sqrt{1-(y-x)^2} \over \sqrt{2}(x^{4/3}+y^{4/3})^{3 \over 2}},
$$
and, in conclusion, we shall need to guarantee that
$$
1 \leq \max\{\Phi_2(x,y), \, \Psi_2(x,y), \, \Omega_2^{(2)}(x,y) \},
$$
for $0 \leq x \leq y \leq 1$, which we also leave as an exercise to the reader.
\end{enumerate}
And, with this last case, the proof is complete.
\end{proof}

In order to prove that $D_{{\mathbb C},2}(2)=\sqrt[4]{\frac{3}{2}}$ we will also need the following description of the extreme points of the unit ball of ${\mathbb R}^3$ endowed with the norm
$$
\|(a,b,c)\|_{\mathbb D}:=\sup\{|az^2+bz+c|:|z|\leq 1\}
$$
for $a,b,c\in{\mathbb R}$.
This norm has been studied by Aron and klimek in \cite{AronKlimek}, where they denote it by $\|\cdot\|_{\mathbb C}$. Observe, once again that
    $$
    \|(a,b,c)\|_{\mathbb D}=\|az^2+bwz+cz^2\|_{\D}.
    $$

\begin{theorem}[Aron and Klimek, \cite{AronKlimek}]
Let $E_{\mathbb R}$ be the real subspace of ${\mathcal P}(^2\ell_\infty^2({\mathbb C}))$ given by
$\{az^2+bwz+cw^2:\ (a,b,c)\in{\mathbb R}^3\}$. Then
    $$
    \ext({\mathsf B}_{E_{\mathbb R}})=\left\{\left(s,\sqrt{4|s||t|\left(\frac{1}{(|s|+|t|)^2}-1\right)},t\right):(s,t)\in G\right\},
    $$
where $\ext({\mathsf B}_{E_{\mathbb R}})$ is the set of extreme points of the unit ball of $E_{\mathbb R}$, namely ${\mathsf B}_{E_{\mathbb R}}$ and $G=\{(s,t)\in{\mathbb R}^2:\text{$|s|+|t|<1$ and $|s+t|\leq (s+t)^2$}\}\cup\{\pm(1,0),\pm(0,1)\}$.
\end{theorem}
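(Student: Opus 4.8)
The plan is to turn the characterisation into a concrete convexity problem by first writing down the norm $\|(a,b,c)\|_{\mathbb D}$ explicitly. Exactly as in the proof of Lemma~\ref{lem:Pablito}, but now with $b$ real, putting $\lambda=e^{i\theta}$ and $u=\cos\theta$ gives
\[
\|(a,b,c)\|_{\mathbb D}^2=\max_{u\in[-1,1]}\bigl[\,4ac\,u^2+2(a+c)b\,u+(a-c)^2+b^2\,\bigr].
\]
I would analyse this quadratic in $u$. When $ac\geq 0$, or when $ac<0$ but the vertex $u^\ast=-\tfrac{(a+c)b}{4ac}$ falls outside $[-1,1]$, the maximum sits at $u=\pm1$ and the norm collapses to the polyhedral expression $\|(a,b,c)\|_{\mathbb D}=|a+c|+|b|$. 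A genuinely curved formula, $\|(a,b,c)\|_{\mathbb D}^2=(a-c)^2+b^2-\tfrac{(a+c)^2b^2}{4ac}$, appears only on the ``exceptional'' set where $ac<0$ and $|u^\ast|<1$. Solving $\|(a,b,c)\|_{\mathbb D}=1$ there for $b$ reproduces exactly the parametrisation in the statement with $s=a$, $t=c$; the requirement that the optimal $u^\ast$ stay inside $(-1,1)$ (equivalently, that $a,c$ have opposite signs and the extremal $\lambda$ be non-real) is what cuts out the region $G$, while $u^\ast=\pm1$ marks the seam along which this curved surface meets the flat faces. The sign ambiguity of the square root is harmless, being absorbed by the linear isometry $(a,b,c)\mapsto(a,-b,c)$.

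With the boundary split into a curved part $\Sigma$ (the exceptional surface) and flat polyhedral faces, I would treat each piece separately. The flat faces lie in the hyperplanes $\{\pm(a+c)\pm b=1\}$, hence are ruled and contribute no extreme points away from their boundary rims, which coincide with $\partial\Sigma$ and with the four points $\pm z^2,\pm w^2$. For the latter I would invoke the elementary Cauchy-type estimate $|a|,|c|\leq\|(a,b,c)\|_{\mathbb D}$ (the modulus of a coefficient is at most the sup-norm of the polynomial on $\D$): the functional $(a,b,c)\mapsto a$ then attains its maximum over $\mathsf B_{E_{\mathbb R}}$ only at $z^2=(1,0,0)$, so that point is exposed, and the other three vertices follow by the symmetries $(a,b,c)\mapsto-(a,b,c)$ and $(a,b,c)\mapsto(c,b,a)$.

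The heart of the argument, and the step I expect to be the main obstacle, is proving that \emph{every} interior point $x_0\in\Sigma$ is extreme. Here I would exploit that each form $g_\theta(a,b,c)=|a\lambda^2+b\lambda+c|^2$, with matrix $M_\theta$, has rank two, so the supporting constraint $g_{\theta^\ast}$ active at $x_0$ is a cylinder with ruling direction $v=(1,-2\cos\theta^\ast,1)$ spanning $\ker M_{\theta^\ast}$. Taking the exposing functional $\phi(x)=x^{\mathsf T}M_{\theta^\ast}x_0$ and applying Cauchy--Schwarz for the positive semidefinite form $M_{\theta^\ast}$ yields $\phi(x)\leq\sqrt{g_{\theta^\ast}(x)}\leq\|x\|_{\mathbb D}\leq 1$ on the ball, with equality forcing $x=x_0+\mu v$ for some $\mu$. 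To conclude that $x_0$ is the unique maximiser, hence exposed, it remains to check that sliding $x_0$ along the ruling $v$ strictly increases the norm; this reduces to showing $\cos\theta^\ast(a+c)+b\neq0$, and a short computation rewrites this quantity as $b\,(a-c)^2/(4|ac|)$, which is nonzero precisely because $ac<0$ forces $a\neq c$ while $b\neq0$ on the interior of $\Sigma$. This identifies $\Sigma$ as a family of exposed points, and together with the previous paragraph pins down $\ext(\mathsf B_{E_{\mathbb R}})$ as the curved family over $G$ augmented by the four vertices. The degenerate cases $b=0$ and $u^\ast=\pm1$, where the directional derivative above vanishes and the ruling is only tangent, I would dispose of by a separate closure argument.
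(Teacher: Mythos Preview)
The theorem is quoted in the paper as a result of Aron and Klimek \cite{AronKlimek} and is not proved there; it is used as a black box in the computation of $D_{\mathbb C,2}(E_{\mathbb R})$. There is therefore no proof in the present paper to compare your proposal against.

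For what it is worth, your outline is headed in the right direction: the explicit formula for $\|(a,b,c)\|_{\mathbb D}^2$ as a quadratic in $u=\cos\theta$, the decomposition of the unit sphere into flat faces $\{|a+c|+|b|=1\}$ and a curved piece, and the exposing-functional argument via the rank-two forms $M_\theta$ are the natural ingredients. One caution: your last sentence undersells the boundary analysis. The points with $b=0$, $ac<0$, $|a|+|c|=1$ lie on the curved surface $\Sigma$ but are \emph{not} extreme; for instance $(\tfrac12,0,-\tfrac12)$ is the midpoint of the segment $\{(\tfrac12+\delta,0,-\tfrac12+\delta):|\delta|<\tfrac12\}$, all of whose points have norm $1$. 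So the ``separate closure argument'' you allude to must in fact \emph{exclude} such points rather than include them, and the passage from the analytic condition $|u^\ast|<1$ to the region $G$ in the statement requires more care than you indicate.
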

\begin{theorem}
The optimal complex polynomial Bohnenblust-Hille constant for polynomials in $E_{\mathbb R}$, which we denote by $D_{{\mathbb C},2}(E_{\mathbb R})$, is given by $D_{{\mathbb C},2}(E_{\mathbb R})=\sqrt[4]{\frac{3}{2}}$. Moreover,
    $$
    D_{{\mathbb C},2}(2)= \sqrt[4]{\frac{3}{2}}\approx 1.1066.
    $$
\end{theorem}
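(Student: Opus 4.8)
The plan is to use Lemma~\ref{lem:Pablito} to reduce the computation of $D_{{\mathbb C},2}(2)$ to the real subspace $E_{\mathbb R}$, and then to combine convexity with the Aron--Klimek description of $\ext({\mathsf B}_{E_{\mathbb R}})$ to reduce everything to a single--variable maximization.

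\emph{Step 1: Reduction to real coefficients.} I would first show that $D_{{\mathbb C},2}(2)=D_{{\mathbb C},2}(E_{\mathbb R})$. Since $E_{\mathbb R}\subseteq{\mathcal P}(^2\ell_\infty^2({\mathbb C}))$, the inequality $D_{{\mathbb C},2}(E_{\mathbb R})\leq D_{{\mathbb C},2}(2)$ is immediate. For the reverse inequality, take any $P=az^2+bwz+cw^2$ with $a,b,c\in{\mathbb C}$; Lemma~\ref{lem:Pablito} furnishes real $a',b',c'$ with $\|P\|_{\D}\geq\|a'z^2+b'wz+c'w^2\|_{\D}$ and $\|(a,b,c)\|_{4/3}=\|(a',b',c')\|_{4/3}$, so the ratio $|P|_{4/3}/\|P\|_{\D}$ never exceeds the corresponding ratio for a polynomial in $E_{\mathbb R}$. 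Taking suprema yields $D_{{\mathbb C},2}(2)\leq D_{{\mathbb C},2}(E_{\mathbb R})$, hence equality.

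\emph{Step 2: Maximization at an extreme point.} After normalizing $\|P\|_{\D}=1$, computing $D_{{\mathbb C},2}(E_{\mathbb R})$ amounts to maximizing $|P|_{4/3}$ over ${\mathsf B}_{E_{\mathbb R}}$. As $P\mapsto|P|_{4/3}$ is a norm (hence convex and continuous) and ${\mathsf B}_{E_{\mathbb R}}$ is a compact convex set in a finite--dimensional space, the maximum is attained at an extreme point; I would then invoke the Aron--Klimek theorem to list these points explicitly. The key simplification is that the continuous part of $G$ collapses: the constraint $|s+t|\leq(s+t)^2$ forces either $|s+t|\geq 1$ or $s+t=0$, and since $|s|+|t|<1$ rules out the first, one is left with $t=-s$, $|s|<\tfrac12$. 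The associated extreme point is $(s,\sqrt{1-4s^2},-s)$, while the discrete points $\pm(1,0,0),\pm(0,0,1)$ contribute only the value $|P|_{4/3}=1$.

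\emph{Step 3: One--variable optimization.} Writing $\sigma=|s|\in[0,\tfrac12]$, the problem reduces to maximizing $g(\sigma)=2\sigma^{4/3}+(1-4\sigma^2)^{2/3}$, since $|P|_{4/3}=g(\sigma)^{3/4}$ at the relevant extreme points. A routine derivative computation gives the interior critical equation $\sigma(1-4\sigma^2)=8\sigma^3$, whence $\sigma^2=\tfrac1{12}$, $1-4\sigma^2=\tfrac23$, and $g(\sigma)=(3/2)^{1/3}$; this beats the boundary values $g(0)=1$ and $g(\tfrac12)=2^{-1/3}$. Therefore $D_{{\mathbb C},2}(E_{\mathbb R})=\big((3/2)^{1/3}\big)^{3/4}=\sqrt[4]{3/2}$, and combined with Step~1 this gives $D_{{\mathbb C},2}(2)=\sqrt[4]{3/2}$. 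The genuinely delicate point is the passage to real coefficients, which is precisely the content of Lemma~\ref{lem:Pablito}; granting that, the remainder is a clean Krein--Milman/Bauer maximum--principle argument followed by elementary calculus.
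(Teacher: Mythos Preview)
Your proof is correct and follows essentially the same strategy as the paper: reduce to $E_{\mathbb R}$ via Lemma~\ref{lem:Pablito}, pass to extreme points by convexity, invoke the Aron--Klimek description, and finish by calculus. Your Step~2 makes explicit something the paper leaves implicit---that the constraint $|s+t|\le(s+t)^2$ together with $|s|+|t|<1$ forces $t=-s$, so the continuous part of $\ext({\mathsf B}_{E_{\mathbb R}})$ is the arc $(s,\sqrt{1-4s^2},-s)$---which cleanly reduces the paper's two--variable optimization of $\Phi$ to your one--variable function $g(\sigma)$; the resulting critical point $\sigma=\sqrt{3}/6$ matches the paper's $(s,t)=\pm(\sqrt{3}/6,-\sqrt{3}/6)$.
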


\begin{proof}
Using convexity we have
    \begin{align*}
    D_{{\mathbb C},2}(E_{\mathbb R})&=\sup\{\|(a,b,c)\|_{\frac{4}{3}}:\|az_{1}^{2}+bz_{1}z_{2}+cz_{2}^{2}\|\leq 1\}\\
    &=\sup\{\|(a,b,c)\|_{\frac{4}{3}}:\|(a,b,c)\|_{\mathbb C}\leq 1\}\\
    &=\sup\{\|(a,b,c)\|_{\frac{4}{3}}:(a,b,c)\in \ext({\mathsf B}_{E_{\mathbb R}})\},
    \end{align*}
Hence
    $$
    D_{{\mathbb C},2}(E_{\mathbb R})=\sup\left\{\left(|s|^\frac{4}{3}+|t|^\frac{4}{3}+\left[4|s||t|\left(\frac{1}{(|s|+|t|)^2}-1\right)\right]^\frac{2}{3}\right)^\frac{3}{4}:(s,t)\in G\right\}.
    $$
If $\Phi(s,t)=\left(|s|^\frac{4}{3}+|t|^\frac{4}{3}+\left[4|s||t|\left(\frac{1}{(|s|+|t|)^2}-1\right)\right]^\frac{2}{3}\right)^\frac{3}{4}$ for $(s,t)\in G$, one can prove using elementary calculus that $\Phi$ attains its maximum on $G$ at $\pm\left(\frac{\sqrt{3}}{6},-\frac{\sqrt{3}}{6}\right)$ and $\Phi\left(\frac{\sqrt{3}}{6},-\frac{\sqrt{3}}{6}\right)=\sqrt[4]{\frac{3}{2}}$. Finally, from Lemma \ref{lem:Pablito} we also obtain that $D_{{\mathbb C},2}(2)= \sqrt[4]{\frac{3}{2}}\approx 1.1066$.
\end{proof}

The reader can find a sketch of the graph of $\Phi$ on the part of $G$ contained in the second quadrant in Figure \ref{fig:Phi}.
\begin{figure}
\centering
\includegraphics[height=.6\textwidth,keepaspectratio=true]{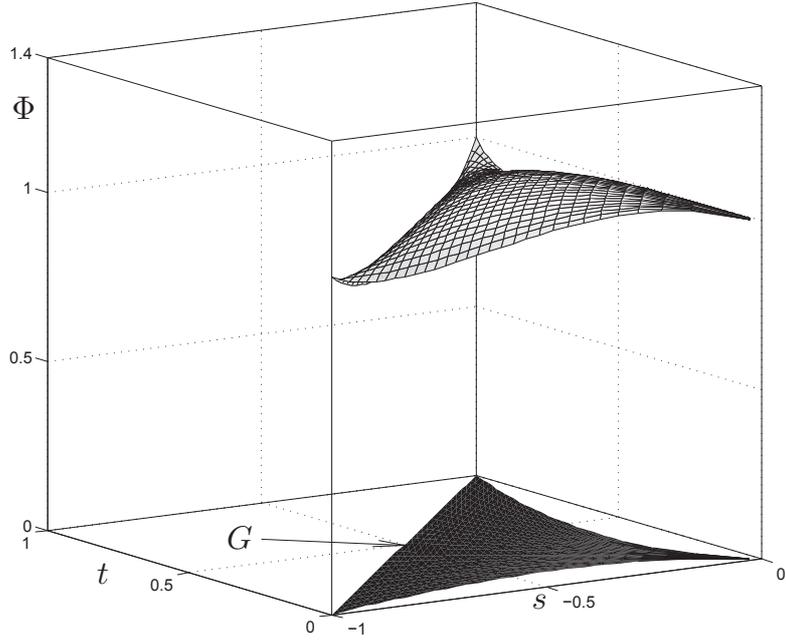}
\caption{Graph of $\Phi$ on $G$ intersected with the second quadrant.}\label{fig:Phi}
\end{figure}

\section{The exact value of $D_{\mathbb{R},2}(2)$ and lower bounds for $D_{{\mathbb R},m}(2)$}\label{sec3}

In \cite{LAA2015} it is proved that the asymptotic hypercontractivity constant of the real polynomial BH inequality is exactly $2$. Is it true that $H_{\mathbb{R},\infty}(2)=2$? The results presented here suggest that, perhaps $H_{\mathbb{R},\infty}(2)<2$. In this section, as we did in the previous one, we will also identify polynomials with the vector of its coefficients.

\begin{remark}\label{rem:norms}
Throughout this section we will compute several times norms of polynomials on the real line numerically. This is done by using Matlab. In particular, if $P(x)$ is a real polynomial on ${\mathbb R}$, we apply the predefined Matlab function \verb"roots.m" to $P'$ in order to obtain an approximation of all the critical points of $P$. If $x_1,\ldots,x_k$ are all the roots of $P'$ in $[-1,1]$, then we approach the norm of $P$ as
    $$
    \|P\|:=\max\{|P(x)|:x\in[-1,1]\}=\max\{|P(x_i)|,|P(\pm 1)|:i=1,\ldots,k\}.
    $$
Another Matlab predefined function, namely \verb"conv.m", is used in order to to multiply polynomials. This is done to obtain Figure \ref{fig:graphs}.
\end{remark}

\subsection{The exact calculation of $D_{{\mathbb R},2}(2)$}

The value of the constant $D_{{\mathbb{R}%
},2}(2)$ can be obtained using the geometry
of the unit ball of ${\mathcal{P}}(\ell_{\infty}^{2}({\mathbb R}))$ described in
\cite{Choi}. We state the result we need for completeness:

\begin{theorem}
\label{the:Choi}[Choi, Kim \cite{Choi}] \label{the:ExtPoints} The set $\ext({\mathsf B}_{{\mathcal{P}}(^{2}\ell_{\infty}^{2}({\mathbb R}))})$ of extreme
points of the unit ball of ${\mathcal{P}}(^{2}\ell_{\infty}^{2}({\mathbb R}))$ is given by
\[
\ext({\mathsf B}_{{\mathcal{P}}(^{2}\ell_{\infty}^{2}({\mathbb R}))})=\{\pm x^{2},\ \pm y^{2},\ \pm(tx^{2}-ty^{2}\pm2\sqrt{t(1-t)}xy):\ \text{$t\in[1/2,1]$}\}.
\]
\end{theorem}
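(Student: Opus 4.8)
The plan is to determine $\ext({\mathsf B}_{{\mathcal P}(^2\ell_\infty^2({\mathbb R}))})$ directly, combining the standard perturbation characterization of extreme points with a reduction of the polynomial norm to a one-variable problem on the edges of the square. Throughout I write $P(x,y)=ax^2+bxy+cy^2$ and identify it with $(a,b,c)\in{\mathbb R}^3$, so that $\|P\|=\sup\{|P(x,y)|:|x|\le1,\ |y|\le1\}$. The first reduction is to the boundary of $[-1,1]^2$: since $\nabla P=(2ax+by,\,bx+2cy)$ vanishes (for $P\ne0$) only at the origin, where $P=0$, the supremum of $|P|$ is attained on $\partial([-1,1]^2)$. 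On the edge $x=1$ one gets the quadratic $y\mapsto a+by+cy^2$ and on $y=1$ the quadratic $x\mapsto ax^2+bx+c$, while the opposite edges give the same suprema because $P$ is even. Hence $\|P\|$ is the larger of the two sup-norms of these quadratics on $[-1,1]$, and the entire contact analysis lives on the four edges.

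Next I would set up the extremality criterion. A norm-one $P$ fails to be extreme precisely when there is a $Q\ne0$ with $\|P\pm Q\|\le1$. At any contact point $z_0$ (where $|P(z_0)|=1$) this forces $Q(z_0)=0$; moreover, if $z_0$ is an interior point of an edge at which the restriction of $P$ to that edge has a strict local extremum of $|P|$, then $\|P\pm Q\|\le1$ additionally forces the tangential derivative $\partial_sQ(z_0)=0$, since $P$ restricted to the edge looks like $1-\kappa(s-s_0)^2+\cdots$ with $\kappa>0$. If the contact set contains a full edge, $Q$ must vanish identically along it; and at a corner only the single equality $Q(z_0)=0$ is forced, because the inward slopes yield inequalities rather than equalities. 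This is the key lemma: the number of \emph{independent equality constraints} imposed on $Q\in{\mathbb R}^3$ by the contact set decides extremality (three such constraints force $Q=0$).

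With this in hand I would verify that each member of the asserted list is extreme. For $x^2$ the contact set consists of the full edges $x=\pm1$, so $Q(1,y)=a'+b'y+c'y^2\equiv0$, whence $a'=b'=c'=0$; the case $y^2$ is symmetric. For $P_t=tx^2-ty^2+2\sqrt{t(1-t)}\,xy$ with $t\in[1/2,1]$, a direct one-variable computation gives $\|P_t\|=1$, attained tangentially at $(1,y_0)$ with value $+1$ and at $(-y_0,1)$ with value $-1$, where $y_0=\sqrt{(1-t)/t}\in[0,1]$. The two tangential conditions at $(1,y_0)$ together with the value condition at $(-y_0,1)$ give the system $a'+b'y_0+c'y_0^2=0$, $b'+2c'y_0=0$, $a'y_0^2-b'y_0+c'=0$, which reduces to $c'(y_0^2+1)^2=0$ and hence to $a'=b'=c'=0$. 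Thus $P_t$, and every sign/coordinate-swap variant of it, is extreme.

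The converse — that there are no further extreme points — is where the real work lies. For a norm-one $P$ not on the list, I would show that its contact set imposes at most two independent equality constraints, produce a $Q\ne0$ in the resulting kernel, and check that $\|P\pm\varepsilon Q\|\le1$ for small $\varepsilon>0$, exhibiting $P=\frac12[(P+\varepsilon Q)+(P-\varepsilon Q)]$ as a nontrivial midpoint. Enumerating the possible contact patterns is done through the one-variable analysis of where $|\alpha s^2+\beta s+\gamma|$ attains its maximum on $[-1,1]$ (at an endpoint, or tangentially at the vertex), reduced by the symmetries $x\leftrightarrow y$ (swapping $a,c$), $y\mapsto-y$ (flipping $b$), and global sign change; a pure corner-contact configuration such as $xy$ illustrates why such $P$ are \emph{not} extreme. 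I expect the principal obstacle to be the verification that the constructed perturbation genuinely preserves the norm rather than merely doing so to first order: at each tangential contact one must dominate the $O(\varepsilon)$ change by the quadratic decay $\kappa(s-s_0)^2$ for small $\varepsilon$, while a compactness argument supplies a uniform gap $|P|\le1-\delta$ away from the finitely many contact points. Carrying out this second-order and compactness bookkeeping uniformly across all cases of the enumeration is the crux of the argument.
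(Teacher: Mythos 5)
The first thing to observe is that the paper contains no proof of this statement at all: Theorem~\ref{the:Choi} is imported verbatim from Choi and Kim \cite{Choi} (``we state the result we need for completeness''), so your attempt must stand on its own as a proof of a set equality --- and at present it only proves one inclusion. What you genuinely carry out is $\supseteq$: the perturbation criterion ($P$ is extreme iff $\Vert P\pm Q\Vert\le 1$ forces $Q=0$), the forcing of $Q(z_0)=0$ at contact points, the forcing of $\partial_sQ(z_0)=0$ at interior tangential contacts, and the resulting $3\times 3$ systems for $\pm x^2$, $\pm y^2$ and $P_t$ are all correct; your reduction to $c'(y_0^2+1)^2=0$ checks out. (Two small repairs: the reduction to the edges follows directly from $2$-homogeneity, and your gradient argument needs Euler's relation $2P=xP_x+yP_y$, since $\nabla P$ vanishes along a whole line when $b^2=4ac$.) More seriously, your corner lemma is false as stated, and it matters exactly at the endpoint of the family: for $t=1/2$ one has $y_0=1$, so the contacts $(1,y_0)$ and $(-y_0,1)$ are \emph{corners}, and ``at a corner only $Q(z_0)=0$ is forced'' yields just the two constraints $a'+b'+c'=0$ and $a'-b'+c'=0$, leaving $Q=x^2-y^2$ alive --- your three-constraint verification silently uses a tangential condition that your own lemma denies. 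The fix is that when the one-sided derivative of $P$ along an edge vanishes at a corner (here $g'(1)=0$ for $g(y)=\tfrac12(1-y^2)+y$), dividing the inequality $\pm q_1s\le(\kappa+O(1))s^2$ by $s<0$ and letting $s\to 0^-$ still forces $q_1=0$; with this correction the case $t=1/2$ closes.

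The genuine gap is the reverse inclusion, which you only announce: ``I would show that the contact set imposes at most two independent equality constraints, produce a $Q\ne 0$, and check $\Vert P\pm\varepsilon Q\Vert\le 1$.'' Beyond being deferred, the underlying decision principle is unsound: ``at most two forced constraints $\Rightarrow$ not extreme'' is false, and $P_{1/2}$ itself is the counterexample under your stated corner lemma. There the kernel of the forced constraints contains $Q=x^2-y^2$, yet for every $\varepsilon\ne 0$ one of the two polynomials $P_{1/2}\pm\varepsilon Q$ has norm $1+2\varepsilon^2+O(\varepsilon^3)>1$ (a direct computation on the edge $x=1$ with $\mu=\tfrac12+|\varepsilon|$ gives maximum $\mu+\tfrac{1}{4\mu}$, while the other sign gives norm exactly $1$), so $P_{1/2}$ is extreme despite the deficient constraint count. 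Thus extremality cannot be read off from counting equality constraints; in every non-listed configuration one must actually exhibit a $Q$ and dominate the $O(\varepsilon)$ perturbation by the quadratic decay at each tangential contact, with a compactness gap away from the contact set --- precisely the ``second-order and compactness bookkeeping'' you yourself call the crux. That enumeration and verification constitute essentially the whole content of the Choi--Kim theorem, and none of it is executed here. As it stands, your proposal shows (after the corner repair) that the listed polynomials are extreme, but not that they are the only ones.
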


As a consequence of the previous result, we obtain the following:

\begin{theorem}
Let $f$ be the real valued function given by $$f(t) = \left[  2t^{\frac{4}{3}}+(2\sqrt{t(1-t)})^{\frac{4}{3}}\right]^{\frac{3}{4}}.$$
We have that $D_{{\mathbb{R}},2}(2) = f(t_0) \approx 1.837373$, where 
    $$
    t_0 = \frac{1}{36} \left(2 \sqrt[3]{107+9 \sqrt{129}}+\sqrt[3]{856-72 \sqrt{129}}+16\right) \approx 0.867835.
    $$
The exact value of $f(t_0)$ is given by
\begin{center}
{\scriptsize $\left(\frac{\left(2 \sqrt[3]{107+9 \sqrt{129}}+\sqrt[3]{856-72 \sqrt{129}}+16\right)^{4/3}}{18\ 6^{2/3}}+\frac{1}{9 \left(-\frac{3}{-2 \sqrt[3]{107+9 \sqrt{129}}+\left(107+9 \sqrt{129}\right)^{2/3}-2 \sqrt[3]{107-9 \sqrt{129}}+\left(107-9 \sqrt{129}\right)^{2/3}-60}\right)^{2/3}}\right)^{3/4},$}	
\end{center}

Moreover, the following normalized polynomials are extreme for this problem:
\[
P_2(x,y)=\pm(t_0 x^{2}- t_0 y^{2}\pm 2\sqrt{t_0(1-t_0)}xy).
\]
\end{theorem}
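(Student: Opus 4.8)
The plan is to reduce the computation of $D_{{\mathbb R},2}(2)$ to a single-variable optimization, exactly as in the complex case treated above. Since the map $P\mapsto |P|_{4/3}$ is a norm, it is convex and continuous, and ${\mathsf B}_{{\mathcal P}(^2\ell_\infty^2({\mathbb R}))}$ is a compact convex set, so by Bauer's maximum principle the supremum of $|\cdot|_{4/3}$ over the unit ball is attained at an extreme point. Hence
$$
D_{{\mathbb R},2}(2)=\sup\{|P|_{4/3}:P\in\ext({\mathsf B}_{{\mathcal P}(^2\ell_\infty^2({\mathbb R}))})\},
$$
and I would feed in the explicit description of $\ext({\mathsf B}_{{\mathcal P}(^2\ell_\infty^2({\mathbb R}))})$ provided by Theorem \ref{the:ExtPoints}.

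The second step is to evaluate $|\cdot|_{4/3}$ on each extreme point. For the vertices $\pm x^2$ and $\pm y^2$ the coefficient vector has a single nonzero entry of modulus one, so $|P|_{4/3}=1$. For the one-parameter family $P=\pm(t x^2 - t y^2 \pm 2\sqrt{t(1-t)}\,xy)$, the three coefficients have moduli $t$, $t$ and $2\sqrt{t(1-t)}$, independently of the sign choices, so
$$
|P|_{4/3}=\left[2t^{4/3}+(2\sqrt{t(1-t)})^{4/3}\right]^{3/4}=f(t),\qquad t\in[1/2,1].
$$
It therefore suffices to maximize $f$ on $[1/2,1]$ and to check that the maximum exceeds $1$, so that the family dominates the vertices.

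The maximization is a routine but delicate calculus exercise. Writing $g(t)=f(t)^{4/3}=2t^{4/3}+2^{4/3}(t(1-t))^{2/3}$ and setting $g'(t)=0$, after clearing the fractional powers one arrives at the condition $t^{2/3}(1-t)^{1/3}=2^{-2/3}(2t-1)$; cubing both sides yields the cubic equation
$$
12t^3-16t^2+6t-1=0.
$$
Solving it by Cardano's formula gives the three real roots, and the unique one in $(1/2,1)$ is $t_0$, whose closed form involves $\sqrt{129}$ exactly as stated. I would then compare $f(1/2)=(1+2^{-1/3})^{3/4}\approx 1.55$ and $f(1)=2^{3/4}\approx 1.68$ with $f(t_0)\approx 1.837$, and note that $g'(1/2)>0$ while $g'(t)\to-\infty$ as $t\to 1^-$, to conclude that the interior critical point $t_0$ is the global maximizer; since $f(t_0)>1$, the extremal polynomials are the members $P_2$ of the Choi--Kim family at $t=t_0$.

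The main obstacle is the last paragraph. Because the critical-point equation was cubed, one must confirm that $t_0$ genuinely solves the original (uncubed) equation and that $g'$ changes sign from positive to negative there, so that a true maximum is obtained rather than a spurious root or a minimum. The remaining bookkeeping is to simplify the Cardano output to the compact radical expression for $f(t_0)$; here it is useful to observe that $\sqrt[3]{856-72\sqrt{129}}=2\sqrt[3]{107-9\sqrt{129}}$, which is what makes the stated closed forms for $t_0$ and $f(t_0)$ match and reveals the Cardano shift by $4/9$.
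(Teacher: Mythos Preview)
Your argument is essentially identical to the paper's: reduce to the extreme points via convexity of $|\cdot|_{4/3}$, plug in the Choi--Kim parametrization to obtain $f(t)$, and maximize on $[1/2,1]$; the paper simply writes ``some calculations will show that the last supremum is attained at $t=t_0$'', whereas you carry those calculations out explicitly. One small correction: the cubic $12t^3-16t^2+6t-1=0$ has negative discriminant and hence exactly one real root (not three), which in fact simplifies your endgame since $t_0$ is then automatically the unique interior critical point.
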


\begin{proof}
Let
$$f(t)=\left[  2t^{\frac{4}{3}}+(2\sqrt{t(1-t)})^{\frac{4}{3}}\right]
^{\frac{3}{4}}. $$
We just have to notice that due to the convexity of the $\ell_{p}$-norms and
Theorem \ref{the:Choi} we have
\begin{align*}
D_{{\mathbb{R}},2}(2) &  =\sup
\{|\mathbf{a}|_{\frac{4}{3}}:\mathbf{a}\in{\mathsf{B}}_{{\mathcal{P}}(^{2}%
\ell_{\infty}^{2}{\mathbb R})}\}\\
&  =\sup\{|\mathbf{a}|_{\frac{4}{3}}:\mathbf{a}\in
\ext({\mathsf B}_{{\mathcal{P}}(^{2}\ell_{\infty}^{2}{\mathbb R})})\}=\sup_{t\in
\lbrack1/2,1]}f(t).
\end{align*}
Some calculations will show that the last supremum is attained at $t=t_0$, concluding the proof.
\end{proof}

Now, if  $\mathbf{a}_{n}$ is the vector of the
coefficients of $P_{2}^{n}$ for each $n\in{\mathbb{N}}$, then we know that
\begin{equation}
D_{{\mathbb{R}},2n}(2)\geq\frac{|\mathbf{a}_{n}|_{\frac{4n}{2n+1}}}{\Vert
P_{2}\Vert^{n}}.\label{equ:case_2}%
\end{equation}
Since $\|P_2\|= 1$, then \eqref{equ:case_2} with $n=300$ (see also Figure \ref{fig:graphs}) proves that
    $$
    D_{\mathbb{R},600}(2)\geq(1.36117)^{600},
    $$
providing numerical evidence showing that
    $$
    H_{{\mathbb R},\infty}(2)\geq 1.36117.
    $$
    
\subsection{Educated guess for the exact calculation of $D_{{\mathbb R},3}(2)$}



To the authors' knowledge the calculation of $\|P\|$ is, in general, far from
being easy. However there is a way to compute $\|P\|$ for specific cases. For
instance Grecu, Mu\~{n}oz and Seoane prove in \cite[Lemma 3.12]{GMS} the
following formula:

\begin{lemma}\label{lem:P_3}
\label{lem:sup_norm_ab} If for every $a,b\in{\mathbb{R}}$ we define
$P_{a,b}(x,y)=ax^{3}+bx^{2}y+bxy^{2}+ay^{3}$ then
\[
\Vert P_{a,b}\Vert=%
\begin{cases}
\left\vert a-\frac{b^{2}}{3a}+\frac{2b^{3}}{27a^{2}}+\frac{2a}{27}\left(
-\frac{3b}{a}+\frac{b^{2}}{a^{2}}\right)  ^{\frac{3}{2}}\right\vert  &
\text{if $a\neq0$ and $b_{1}<\frac{b}{a}<3-2\sqrt{3}$},\\
\left\vert 2a+2b\right\vert  & \text{otherwise},
\end{cases}
\]
where
\[
b_{1}=\frac{3}{7}\left(  3-\frac{2\sqrt[3]{9}}{\sqrt[3]{-12+7\sqrt{3}}%
}+2\sqrt[3]{-36+21\sqrt{3}}\right)  \thickapprox-1.6692.
\]

\end{lemma}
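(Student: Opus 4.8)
The plan is to reduce the two-variable supremum to a one-variable problem, to evaluate the resulting cubic at its interior critical point in closed form, and then to run the case analysis that decides which candidate realizes the norm. First I would exploit the structure of $P_{a,b}$. Since it is $3$-homogeneous, $|P_{a,b}|$ attains its maximum over $[-1,1]^{2}$ on the boundary of the square; the symmetry $P_{a,b}(x,y)=P_{a,b}(y,x)$ restricts attention to the edges $x=\pm1$, and the oddness $P_{a,b}(-x,-y)=-P_{a,b}(x,y)$ identifies $x=-1$ with $x=1$. Hence $\|P_{a,b}\|=\max_{y\in[-1,1]}|g(y)|$, where $g(y):=P_{a,b}(1,y)=ay^{3}+by^{2}+by+a$. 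As $P_{-a,-b}=-P_{a,b}$ leaves both the norm and the ratio $b/a$ fixed while changing the asserted expression only by an overall sign inside $|\cdot|$, I may assume $a>0$. The degenerate case $a=0$ is immediate: there $g(y)=by(y+1)$, whose maximum modulus on $[-1,1]$ is $|2b|=|2a+2b|$, the second branch.

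Next I would analyse $g$ on $[-1,1]$. Writing $t:=b/a$, the endpoints give $g(1)=2a(1+t)$ and $g(-1)=0$, while interior critical points solve $g'(y)=3ay^{2}+2by+b=0$, namely $y_{\pm}=\frac{-b\pm\sqrt{b^{2}-3ab}}{3a}$; these are real precisely when $b^{2}-3ab\ge0$, which holds throughout the regime $t<0$ that turns out to be relevant. Since $a>0$ the parabola $g'$ opens upward, so $y_{-}$ is a local maximum and $y_{+}$ a local minimum of $g$. Performing the polynomial division $g=\tfrac{1}{3}y\,g'+r$ and then eliminating $y^{2}$ via $g'=0$ evaluates $g$ at the critical points rationally in $y_{\pm}$, and substituting $y_{-}$ gives exactly $a-\frac{b^{2}}{3a}+\frac{2b^{3}}{27a^{2}}+\frac{2a}{27}\left(\frac{b^{2}}{a^{2}}-\frac{3b}{a}\right)^{3/2}$, the first-branch expression; the value at $y_{+}$ is the same with the last term reversed in sign.

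The decisive step, and the one I expect to be the main obstacle, is deciding for which $t$ the interior value $g(y_{-})$ rather than the endpoint value $g(1)$ equals $\max|g|$. Monitoring the sign of $g'$ shows that $g$ climbs from $g(-1)=0$ to $g(y_{-})\ge0$, and that the only genuine competitor in modulus is $|g(1)|$: one must still check, separately in the regimes $t<-1$ and $-1<t<0$, that the local minimum $g(y_{+})$ either falls outside $(-1,1)$ or stays smaller in absolute value, and that $y_{-}\in(-1,1)$ throughout. Granting this, the first branch holds exactly on the interval where $g(y_{-})\ge|g(1)|$, whose endpoints solve $g(y_{-})=\pm g(1)$. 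Isolating the radical and squaring turns $g(y_{-})=g(1)$ into $(t+1)^{2}(t^{2}-6t-3)=0$, whose relevant root is $t=3-2\sqrt{3}$, and turns $g(y_{-})=-g(1)$ into $(t+1)(7t^{3}-27t^{2}+81t+243)=0$. In the second equation the factor $t+1$ is extraneous (it merely records $g(1)=0$), so $b_{1}$ is the unique real root of the resolvent cubic $7t^{3}-27t^{2}+81t+243=0$, which Cardano's formula writes as the stated radical expression. The real work is thus the bookkeeping of which candidate is the true maximizer across the whole range, together with extracting $b_{1}$ as the correct real root rather than one of the spurious roots introduced by squaring.
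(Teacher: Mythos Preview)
The paper does not prove this lemma: it quotes it verbatim from Grecu, Mu\~noz-Fern\'andez and Seoane-Sep\'ulveda \cite[Lemma~3.12]{GMS} and uses it as a black box, so there is no in-paper argument to compare against. Your outline is the natural approach and matches how such formulas are obtained: the reductions by homogeneity, the symmetry $P_{a,b}(x,y)=P_{a,b}(y,x)$, and oddness correctly collapse the problem to $\max_{y\in[-1,1]}|g(y)|$ with $g(y)=ay^{3}+by^{2}+by+a$; the closed form for $g(y_{-})$ is computed correctly and coincides with the first branch; and the transition points are indeed governed by $g(y_{-})=\pm g(1)$, which after isolating the radical and squaring yield $(t+1)^{2}(t^{2}-6t-3)=0$ and $(t+1)(7t^{3}-27t^{2}+81t+243)=0$, giving $3-2\sqrt{3}$ and $b_{1}$ respectively.

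The only places where your write-up is still a sketch are exactly the ones you flag: verifying that $y_{-}\in(-1,1)$ for all $t<0$ (this follows from $(3a-b)^{2}>b^{2}-3ab$), that $y_{+}\in(-1,1)$ iff $t>-1$, that on $(-1,3-2\sqrt{3})$ the local minimum value $|g(y_{+})|$ never exceeds $g(y_{-})$, and that the squaring step does not introduce a spurious endpoint. These are routine but should be written out; once they are, your argument is complete and is essentially the computation behind the cited lemma.
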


From Lemma \ref{lem:sup_norm_ab} we have the
following sharp polynomial Bohnenblust-Hille type constant:

\begin{theorem}
Let $P_{a,b}(x,y)=ax^{3}+bx^{2}y+bxy^{2}+ay^{3}$ for $a,b\in{\mathbb{R}}$ and
consider the subset of ${\mathcal{P}}(^{3}\ell_{\infty}^{2}({\mathbb R}))$ given by
$E=\{P_{a,b}:a,b\in{\mathbb{R}}\}$. Then%
\[
\frac{|(a,b,b,a)|_{\frac{3}{2}}}{\Vert P_{a,b}\Vert}=\left\{
\begin{array}
[c]{c}%
\frac{27a^{2}\left(  2|a|^{\frac{3}{2}}+2|b|^{\frac{3}{2}}\right)  ^{\frac
{2}{3}}}{\left\vert 27a^{3}-9ab^{2}+2b^{3}+2\sign(a)\left(  -3ab+b^{2}\right)
^{\frac{3}{2}}\right\vert },\text{ if $a\neq0$ and $b_{1}<\frac{b}{a}%
<3-2\sqrt{3}$},\\
\frac{\left(  2|a|^{\frac{3}{2}}+2|b|^{\frac{3}{2}}\right)  ^{\frac{2}{3}}%
}{2|a+b|},\text{ otherwise}%
\end{array}
\right.
\]
where $b_{1}$ was defined in Lemma \ref{lem:sup_norm_ab}. Moreover, the above
function attains its maximum when $\frac{b}{a}=b_{1}$, which implies that
\[
D_{{\mathbb{R}},3}(E)=\frac{\left(  2+2|b_{1}|^{\frac{3}{2}}\right)
^{\frac{2}{3}}}{2|1+b_{1}|}\thickapprox 2.5525
\]
\end{theorem}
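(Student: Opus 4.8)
The plan is to reduce the computation of $D_{\mathbb{R},3}(E)$ to a one-variable optimization problem and then locate the maximizer analytically. First I would note that the ratio $|(a,b,b,a)|_{3/2}/\|P_{a,b}\|$ is invariant under the scaling $(a,b)\mapsto(\lambda a,\lambda b)$ for $\lambda>0$, since both numerator and denominator are positively homogeneous of degree one in $(a,b)$. Hence it suffices to consider the single parameter $u=b/a$ (together with the sign of $a$, which can be absorbed), so the problem becomes maximizing a function of $u$ alone. I would treat the two regimes of Lemma~\ref{lem:sup_norm_ab} separately: the ``generic'' regime where $b_1<u<3-2\sqrt{3}$ uses the first branch of $\|P_{a,b}\|$, and the complementary regime uses $\|P_{a,b}\|=|2a+2b|$.

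Next I would analyze the ``otherwise'' branch, where the ratio reduces to
$$
g(u):=\frac{\left(2+2|u|^{3/2}\right)^{2/3}}{2|1+u|}.
$$
A direct derivative computation shows that $g$ has no interior critical point giving a value as large as the boundary, so on this branch the supremum is approached as $u\to b_1^-$ (or at the matching endpoint of the regime). The crux is therefore the generic branch: one sets $a=1$ without loss of generality and studies
$$
\frac{27\left(2+2|u|^{3/2}\right)^{2/3}}{\left|27-9u^2+2u^3+2\,\sign(1)\left(-3u+u^2\right)^{3/2}\right|}
$$
as a function of $u\in(b_1,3-2\sqrt{3})$. I expect the main obstacle to be showing that this function is monotone on the interval so that its supremum is attained at the left endpoint $u=b_1$; this requires differentiating a somewhat unwieldy expression involving the term $(-3u+u^2)^{3/2}$ and verifying the sign of the derivative, which is where the elementary calculus becomes genuinely tedious.

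Once monotonicity (or at least the location of the maximum at $u=b_1$) is established, the two branches must be reconciled at the point $u=b_1$, which is precisely the boundary between the two regimes of Lemma~\ref{lem:sup_norm_ab}. Since $b_1$ is the root making the first branch of $\|P_{a,b}\|$ transition to the second, I would check that the supremum over both regimes is achieved there and evaluate the ratio. Substituting $b/a=b_1$ and using $\|P_{a,b}\|=|2a+2b|=2|a||1+b_1|$ in that limiting configuration yields
$$
D_{\mathbb{R},3}(E)=\frac{\left(2+2|b_1|^{3/2}\right)^{2/3}}{2|1+b_1|},
$$
and a final numerical evaluation with $b_1\approx-1.6692$ gives the stated value $\approx 2.5525$. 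The delicate point to handle with care is the absolute value $|1+b_1|$: since $b_1\approx-1.67<-1$, we have $1+b_1<0$, so $|1+b_1|=|b_1|-1$, and one must confirm the numerator and denominator are correctly signed before reporting the numerical constant.
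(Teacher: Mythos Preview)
Your proposal is correct and follows essentially the same approach as the paper: derive the piecewise formula for the ratio directly from Lemma~\ref{lem:sup_norm_ab} by dividing $|(a,b,b,a)|_{3/2}$ by the two branches of $\|P_{a,b}\|$, exploit homogeneity to reduce to the single variable $u=b/a$, and then locate the maximum at the transition point $u=b_1$. The paper itself gives no detailed argument for the optimization step---it simply asserts that the maximum occurs at $b/a=b_1$ and supports this with the plot in Figure~\ref{fig:Pab}---so your outline of the monotonicity analysis on the interior branch and the boundary comparison on the ``otherwise'' branch is in fact more explicit than what the paper provides.
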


\begin{figure}
\centering
\includegraphics[height=.6\textwidth,keepaspectratio=true]{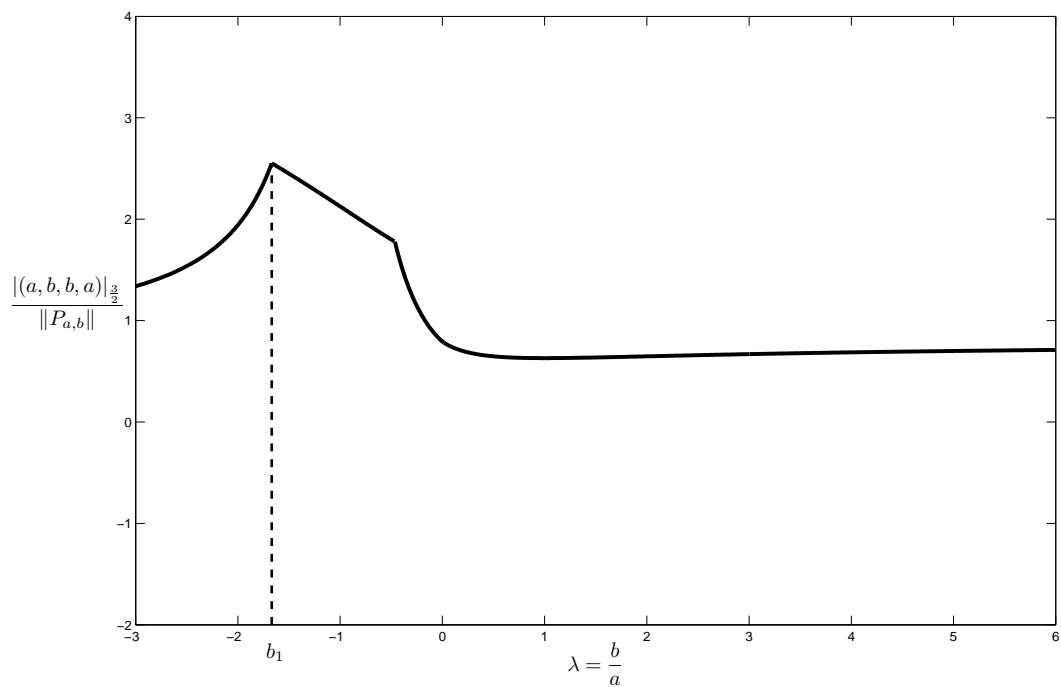}
\caption{Graph of the quotient $\frac{|(a,b,b,a)|_\frac{3}{2}}{\|P_{a,b}\|}$ as a function of $\lambda=\frac{a}{b}$.}\label{fig:Pab}
\end{figure}
The authors have numerical evidence to state that
\[
D_{{\mathbb{R}},3}(2)=D_{{\mathbb{R}}%
,3}(E).
\]
Moreover, one polynomial for which $D_{{\mathbb{R}},3}(2)$ would be attained is 
    $$
    P_{3}(x,y)=x^{3}+b_1x^{2}y+b_1xy^{2}+y^{3},
    $$
where $b_1\approx -1.6692$ is as in Lemma \ref{lem:P_3}.
It can be proved from Lemma \ref{lem:P_3} that
    $$
    \|P_3\|\approx 1.33848,
    $$
up to 5 decimal places. If $\mathbf{a_n}$ is the vector of the coefficients of $P_{3}(x,y)^{n}$ and we use the fact that
\begin{equation}
D_{{\mathbb{R}},3n}(2)\geq\frac{|\mathbf{a}_{n}|_{\frac{6n}{3n+1}}}{\Vert
P_{3}\Vert^{n}},\label{equ:case_3}%
\end{equation}
then putting $n=200$ in \eqref{equ:case_3} we obtain, for instance,
    $$
    D_{{\mathbb R},600}(2)\geq (1.42234)^{600},
    $$
which provides numerical evidence showing that
    $$
    H_{{\mathbb R},\infty}(2)\geq 1.42234.
    $$


\subsection{Numerical calculation of $D_{{\mathbb R},5}(2)$}

Let us define the polynomial
\[
P_{5}(x,y)=ax^{5}-bx^{4}y-cx^{3}y^{2}+cx^{2}y^{3}+bxy^{4}-ay^{5},
\]
with
\begin{align*}
a &  =0.19462,\\
b &  =0.66008,\\
c &  =0.97833.
\end{align*}
The norm of $P_5$ can be calculated numerically (using Remark \ref{rem:norms}), and it turns out to be
\[
\Vert P_{5}\Vert=0.28617,
\]
up to $5$ decimal places.
The authors have numerical evidence showing that
    $$
    D_{{\mathbb{R}}%
    ,5}(2)\approx 6.83591.
    $$
In any case we have
    $$
    D_{{\mathbb{R}},5}(2)\geq\frac{|(a,-b,-c,c,b,-a)|_{\frac{5}{3}}}{\Vert
    P_{5}\Vert}\approx 6.83591.
    $$
It is interesting to observe that we can improve numerically the estimate $H_{\infty,{\mathbb R}}(2)\geq \sqrt[8]{27}\approx 1.50980$ (see \cite[Theorem 4.2]{LAA2015}) by considering polynomials of the form $P_5^n$. Indeed, if  $\mathbf{a}_{n}$ is the vector of the
coefficients of $P_{5}^{n}$ for each $n\in{\mathbb{N}}$, then we know that
\begin{equation}
D_{{\mathbb{R}},5n}(2)\geq\frac{|\mathbf{a}_{n}|_{\frac{10n}{5n+1}}}{\Vert
P_{5}\Vert^{n}},\label{equ:case_5}%
\end{equation}
Using \eqref{equ:case_5} with $n=120$ we obtain, in particular (see also Figure \ref{fig:graphs})
    $$
    D_{\mathbb{R},600}(2)\geq(1.54987)^{600},
    $$
providing numerical evidence showing that
    $$
    H_{{\mathbb R},\infty}(2)\geq 1.54987.
    $$


\subsection{Educated guess for the exact calculation of $D_{{\mathbb R},6}(2)$}

The authors have numerical evidence pointing to the fact that an extreme
polynomial in the Bohnenblust-Hille inequality for polynomials in
${\mathcal{P}}(^{6}\ell_{\infty}^{2}({\mathbb R}))$ may be of the form
\[
Q_{a,b}(x,y)=ax^{5}y+bx^{3}y^{3}+axy^{5}.
\]
This motivates a deeper study of this type of polynomials, which we do in the
following result.

\begin{theorem}\label{the:case6}
Let $Q_{a,b}(x,y)=ax^5y+bx^3y^3+axy^5$ for $a,b\in{\mathbb{R}}$ and
consider the subspace of ${\mathcal{P}}(^{6}\ell_{\infty}^{2}({\mathbb R}))$ given by
$F=\{Q_{a,b}:a,b\in{\mathbb{R}}\}$. Suppose $\lambda_0<\lambda_1$ are the only two roots of the equation
    $$
    \frac{|3\lambda^2-20+\lambda\sqrt{9\lambda^2-20}|}{25}\sqrt{\frac{-3\lambda-\sqrt{9\lambda^2-20}}{10}}=|2+\lambda|.
    $$
Then if $\lambda =\frac{b}{a}$ we have

\[
\frac{|(0,a,0,b,0,a,0)|_{\frac{12}{7}}}{\Vert Q_{a,b}\Vert}=\left\{
\begin{array}
[c]{c}%
\frac{25\sqrt{10}\left(2+|\lambda|^{\frac{12}{7}}\right)^{\frac
{12}{7}}}{\left|3\lambda^2-20+\lambda\sqrt{9\lambda^2-20}\right|\sqrt{-3\lambda-\sqrt{9\lambda^2-20}} },\text{ if $a\neq0$ and $\lambda_0<\frac{b}{a}%
<\lambda_1$},\\
\frac{\left(  2+|\lambda|^{\frac{12}{7}}\right)^{\frac{7}{12}}
}{|2+\lambda|},\text{ otherwise.}%
\end{array}
\right.
\]
Observe that $\lambda_0\approx -2.2654$, $\lambda_1\approx -1.6779$ and the above
function attains its maximum when $\frac{b}{a}=\lambda_0$ (see Figure \ref{fig:Qab}), which implies that
\[
D_{{\mathbb{R}},6}(F)=\frac{\left(  2+|\lambda_{0}|^{\frac{12}{7}}\right)
^{\frac{12}{7}}}{|2+\lambda_{0}|}\thickapprox 10.7809.
\]
\end{theorem}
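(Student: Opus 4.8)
The plan is to reduce the two–variable supremum defining $\|Q_{a,b}\|$ to a one–variable optimisation, which is exactly what the closed form in the statement anticipates. Since the unit ball of $\ell_\infty^2(\mathbb{R})$ is the square $[-1,1]^2$ and $Q_{a,b}$ is $6$–homogeneous, its supremum cannot be attained at an interior point (scaling any nonzero value out to the boundary strictly increases $|Q_{a,b}|$), so the norm lives on the boundary edges. Because $Q_{a,b}(x,y)=Q_{a,b}(y,x)$ and $Q_{a,b}(-x,-y)=Q_{a,b}(x,y)$, all four edges carry the same values of $|Q_{a,b}|$, and I may fix $x=1$. Writing $\lambda=\frac{b}{a}$ (the case $a=0$ being trivial, giving ratio $1$) gives $Q_{a,b}(1,t)=a\,g(t)$ with $g(t)=t^5+\lambda t^3+t$, so that $\|Q_{a,b}\|=|a|\max_{t\in[0,1]}|g(t)|$.

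Next I would locate the critical points of $g$ on $[0,1]$. Setting $u=t^2$ in $g'(t)=5t^4+3\lambda t^2+1=0$ yields $u=\frac{-3\lambda\pm\sqrt{9\lambda^2-20}}{10}$, which are real and positive only when $\lambda\le-\frac{2\sqrt5}{3}$; a sign analysis of the upward parabola $5u^2+3\lambda u+1$ (positive at $u=0$) shows that the smaller root $u_-=\frac{-3\lambda-\sqrt{9\lambda^2-20}}{10}$ yields a local maximum and the larger $u_+$ a local minimum. To evaluate $g$ at $\sqrt{u_-}$ cleanly I would exploit the critical relation $t^4=\frac{-3\lambda t^2-1}{5}$ to reduce $t^4+\lambda t^2+1$ to $\frac{2\lambda t^2+4}{5}$, obtaining
$$
g\!\left(\sqrt{u_-}\right)=-\frac{\sqrt{u_-}\,\bigl(3\lambda^2-20+\lambda\sqrt{9\lambda^2-20}\bigr)}{25},
$$
whose absolute value is precisely the left–hand side of the equation defining $\lambda_0,\lambda_1$.

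I would then compare this interior value with the endpoint value $g(1)=2+\lambda$ (and $g(0)=0$). The two coincide exactly when $\lambda$ solves the displayed equation, i.e. at $\lambda_0$ and $\lambda_1$; for $\lambda\in(\lambda_0,\lambda_1)$ the interior local maximum governs $\max_{[0,1]}|g|$, while otherwise $|g(1)|=|2+\lambda|$ is largest. This yields the two–branch formula for $\|Q_{a,b}\|$, and dividing into $|(0,a,0,b,0,a,0)|_{12/7}=|a|\,(2+|\lambda|^{12/7})^{7/12}$ produces the stated quotient once the factor $|a|$ cancels. For the maximisation over $\lambda$, on the admissible range $\lambda\le\lambda_0$ of the ``otherwise'' branch the quotient $\frac{(2+|\lambda|^{12/7})^{7/12}}{|2+\lambda|}$ increases toward its value at $\lambda_0$ (tending to $1$ as $\lambda\to-\infty$), while on $(\lambda_0,\lambda_1)$ the interior branch does not exceed its boundary value; hence the global maximum is attained at $\lambda=\lambda_0$, where the two branches agree, giving $D_{\mathbb{R},6}(F)=\frac{(2+|\lambda_0|^{12/7})^{7/12}}{|2+\lambda_0|}\approx10.7809$.

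The main obstacle I anticipate is the second and third steps: rigorously establishing that $\max_{[0,1]}|g|$ is governed solely by the single local maximum $\sqrt{u_-}$ and the endpoint $t=1$. One must dispose of the competing local minimum at $\sqrt{u_+}$ (which does lie in $[0,1]$ for part of the relevant range) by checking that its value stays positive and below the local maximum, and one must verify that the transcendental equation for $\lambda_0,\lambda_1$ has exactly the two claimed roots where critical points exist. The concluding one–variable maximisation, a comparison of two explicit but unwieldy functions across the branch boundary, is where reliance on elementary calculus and numerical confirmation is heaviest.
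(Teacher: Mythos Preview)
Your approach is essentially the same as the paper's: reduce by homogeneity and symmetry to the one–variable polynomial $q_\lambda(t)=t^5+\lambda t^3+t$ on $[0,1]$, locate its critical points via the quadratic in $u=t^2$, compare the value at the smaller critical point with the endpoint value $|2+\lambda|$, and identify $\lambda_0,\lambda_1$ as the crossover points. You are in fact a bit more explicit than the paper about why the norm lives on an edge, why the restriction to $[0,1]$ suffices (oddness of $q_\lambda$), and about the final maximisation in $\lambda$; the paper simply asserts $|q_\lambda(x_0)|\ge|q_\lambda(x_1)|$ on the relevant range and defers the location of the maximum to a figure, so your acknowledged reliance on elementary calculus and numerical confirmation at those points is exactly in line with the original.
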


\begin{proof}
We do not lose generality by considering only polynomials of the form $Q_{1,\lambda}$, in which case
    $$
    \|Q_{1,\lambda}\|=\sup\{|x^5+\lambda x^3+x|:x\in[0,1]\}.
    $$
The polynomial $q_{\lambda}(x):=x^5+\lambda x^3+x$ has no critical points if $\lambda>-\frac{2\sqrt{5}}{3}$, otherwise it has the following critical points in $[0,1]$:
    $$
        x_0:=\sqrt{\frac{-3\lambda-\sqrt{9\lambda^2-20}}{10}}\quad\text{and}\quad
        x_1:=\sqrt{\frac{-3\lambda+\sqrt{9\lambda^2-20}}{10}}\quad\text{if $-2\leq \lambda\leq -\frac{2\sqrt{5}}{3}$,}
    $$
and $x_0$ if $\lambda\leq -2$. Notice that
    \begin{align*}
     q_\lambda(x_0)&= \frac{-3\lambda^2+20-\lambda\sqrt{9\lambda^2-20}}{20}x_0\\
     q_\lambda(x_1)&= \frac{-3\lambda^2+20+\lambda\sqrt{9\lambda^2-20}}{20}x_1.
    \end{align*}
It is easy to check that $|q_\lambda(x_0)|\geq |q_\lambda(x_1)|$ for $-2\leq \lambda\leq -\frac{2\sqrt{5}}{3}$, which implies that
    $$
    \|Q_{1,\lambda}\|=\begin{cases}
    \max\{|2+\lambda|,|q_\lambda(x_0)|\}&\text{if $-2\leq \lambda\leq -\frac{2\sqrt{5}}{3}$,}\\
    |2+\lambda|&\text{otherwise.}
    \end{cases}
    $$
The equation $|2+\lambda|=|q_\lambda(x_0)|$ turns out to have only two roots, namely $\lambda_0\approx -2.2654$ and $\lambda_1\approx -1.6779$. By continuity, it is easy to prove that $|2+\lambda|\leq |q_\lambda(x_0)|$ only if $-2\leq \lambda\leq -\frac{2\sqrt{5}}{3}$, which concludes the proof.
\end{proof}

As mentioned above, we have numerical evidence showing that
    $$
    D_{{\mathbb{R}},6}(2)=D_{{\mathbb{R}},6}(F)=\frac{\left(  2+|\lambda_{0}|^{\frac{12}{7}}\right)
^{\frac{12}{7}}}{|2+\lambda_{0}|}\thickapprox 10.7809.
    $$
In any case we do have that
\[
D_{{\mathbb{R}},6}(2)\geq 10.7809.
\]

\begin{figure}
\centering
\includegraphics[height=.6\textwidth,keepaspectratio=true]{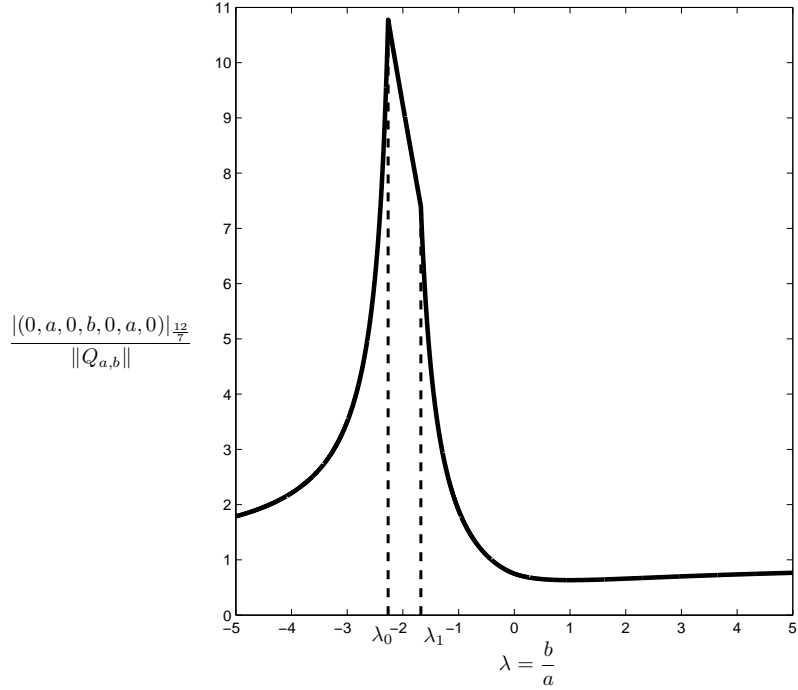}
\caption{Graph of the quotient $\frac{|(0,a,0,b,0,a,0)|_\frac{12}{7}}{\|Q_{a,b}\|}$ as a function of $\lambda=\frac{b}{a}$.}\label{fig:Qab}
\end{figure}

As we did in the previous cases, it would be interesting to know if we can
improve numerically our best lower bound on $H_{{\mathbb{R}},\infty}$ by considering powers of
\[
P_{6}(x,y)=Q_{1,\lambda_{0}}(x,y)=x^{5}y+\lambda_{0}x^{3}y^{3}+xy^{5},
\]
with $\lambda_0$ as in Theorem \ref{the:case6} ($\lambda_{0}\approx-2.2654$).
If $\mathbf{a}_{n}$ is the vector of the coefficients of $P_{6}^{n}$ for each
$n\in{\mathbb{N}}$, then we know that
\begin{equation}
\label{equ:case_6}D_{{\mathbb{R}},6n}(2)\geq\frac{|\mathbf{a}_{n}|_{\frac
{12n}{6n+1}}}{\|P_{6}\|^{n}}.
\end{equation}
Using \eqref{equ:case_6} with $n=100$ and estimating $\|P_6\|$ according to Remark \ref{rem:norms} we obtain
    \[
    D_{\mathbb{R},600}(2)\geq(1.58432)^{600},
    \]
which suggests that (see Figure \ref{fig:graphs})
    $$
    H_{\infty,\mathbb{R}}(2)\geq 1.58432.
    $$

\subsection{Numerical calculation of $D_{{\mathbb R},7}(2)$}
Let us define the polynomial
\[
P_{7}(x,y)=-ax^7+bx^6y+cx^5y^2-dx^4y^3-dx^3y^4+cx^2y^5+bxy^6-ay^7,
\]
with
\begin{align*}
a &  =0.05126,\\
b &  =0.22070,\\
c &  =0.50537,\\
d &  =0.71044.
\end{align*}
It can be proved numerically (using Remark \ref{rem:norms}) that
\[
\Vert P_{7}\Vert\approx 0.07138,
\]
up to $5$ decimal places. The authors have numerical evidence showing that
    $$
    D_{{\mathbb{R}},7}(2)\approx \frac{|(-a,b,c,-d,-d,c,b,-a)|_{\frac{7}{4}}}{\Vert
    P_{7}\Vert}\approx 19.96308.
    $$
If $\mathbf{a}_{n}$ is the vector of the
coefficients of $P_{7}^{n}$ for each $n\in{\mathbb{N}}$, then we know that
\begin{equation}
D_{{\mathbb{R}},7n}(2)\geq\frac{|\mathbf{a}_{n}|_{\frac{14n}{7n+1}}}{\Vert
P_{7}\Vert^{n}}.\label{equ:case_7}%
\end{equation}

Moreover, if we put $n=86$ in \eqref{equ:case_7} we obtain
    $$
    D_{\mathbb{R},602}(2)\geq (1.61725)^{602},
    $$
suggesting that
    $$
    H_{{\mathbb R},\infty}(2) \geq 1.61725.
    $$

\subsection{Numerical calculation of $D_{{\mathbb R},8}(2)$}
Let us define the polynomial
\[
P_{8}(x,y)=-ax^7y+bx^5y^3-bx^3y^5+axy^7,
\]
with
\begin{align*}
a &  =0.15258,\\
b &  =0.64697.
\end{align*}
It can be established numerically (see Remark \ref{rem:norms}) that \[
\| P_{8}\|\approx 0.02985,
\]
up to $5$ decimal places. 
The authors have numerical evidence showing  that
    $$
    D_{{\mathbb{R}},8}(2)\approx \frac{|(0,-a,0,b,0,-b,0,a,0)|_{\frac{16}{9}}}{\Vert
    P_{8}\Vert}\approx 33.36323.
    $$
If $\mathbf{a}_{n}$ is the vector of the
coefficients of $P_{8}^{n}$ for each $n\in{\mathbb{N}}$, then we know that
\begin{equation}
D_{{\mathbb{R}},8n}(2)\geq\frac{|\mathbf{a}_{n}|_{\frac{16n}{8n+1}}}{\Vert
P_{8}\Vert^{n}}.\label{equ:case_8}%
\end{equation}
Moreover, using \eqref{equ:case_8} with $n=75$ we obtain
    $$
    D_{\mathbb{R},600}(2)\geq (1.64042)^{600},
    $$
which suggests that
    $$
    H_{{\mathbb R},\infty}(2)\geq 1.64042.
    $$

\subsection{Numerical calculation of $D_{{\mathbb R},10}(2)$}
In this case our numerical estimates show that there exists an extreme polynomial in the Bohnenblust-Hille polynomial inequality in ${\mathcal P}(^{10}\ell_\infty^2({\mathbb R}))$ of the form
    $$
    P_{10}(x,y)=ax^9y+bx^7y^3+x^5y^5+bx^3y^7+axy^9,
    $$
with
\begin{align*}
a &  =0.0938,\\
b &  =-0.5938.
\end{align*}
It can be computed numerically (see Remark \ref{rem:norms}) that
\[
\Vert P_{10}\Vert\approx 0.01530,
\]
up to $5$ decimal places. 
The authors have numerical evidence showing  that
    $$
    D_{{\mathbb{R}},10}(2)\approx \frac{|(0,a,0,b,0,1,0,b,0,a,0)|_{\frac{20}{11}}}{\Vert
    P_{10}\Vert}\approx 90.35556.
    $$
If $\mathbf{a}_{n}$ is the vector of the
coefficients of $P_{10}^{n}$ for each $n\in{\mathbb{N}}$, then we know that
\begin{equation}
D_{{\mathbb{R}},10n}(2)\geq\frac{|\mathbf{a}_{n}|_{\frac{20n}{10n+1}}}{\Vert
P_{10}\Vert^{n}}.\label{equ:case_10}%
\end{equation}
If we set $n=60$ in \eqref{equ:case_10} then we obtain
    $$
    D_{{\mathbb R},600}(2)\geq (1.65171)^{600},
    $$
which suggests that
    $$
    H_{{\mathbb R},\infty}(2)\geq 1.65171.
    $$

We have sketched in Figure \ref{fig:graphs} a summary of the numerical results obtained in this section.

\begin{figure}
\centering
\includegraphics[height=.35\textwidth,keepaspectratio=true]{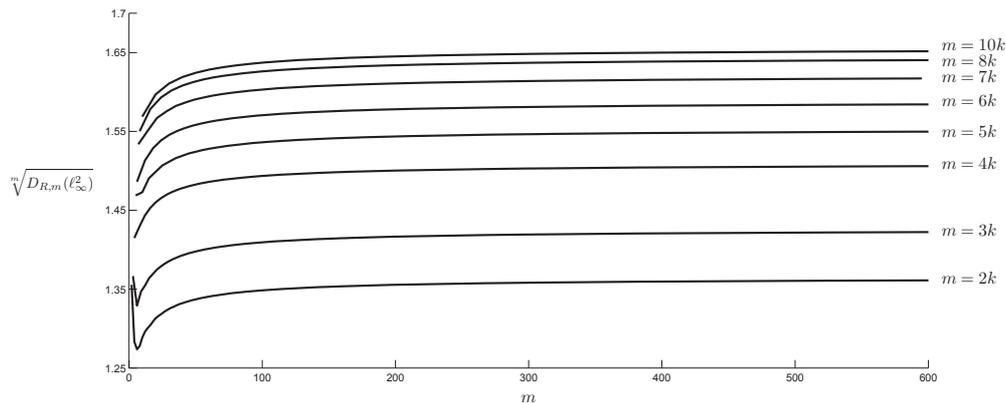}
\caption{Graphs of the estimates on $\sqrt[m]{D_{{\mathbb R},m}(2)}$ obtained by using \eqref{equ:case_2} through \eqref{equ:case_10}.}\label{fig:graphs}
\end{figure}

\begin{bibdiv}
\begin{biblist}
\bib{AronKlimek}{article}{
  author={Aron, R.M.},
  author={Klimek, M.},
  title={Supremum norms for quadratic polynomials},
  journal={Arch. Math. (Basel)},
  volume={76},
  date={2001},
  number={1},
  pages={73--80},
}

\bib{bayart}{article}{
   author={Bayart, F.},
   author={Pellegrino, D.},
   author={Seoane-Sep\'{u}lveda, J.B.},
   title={The Bohr radius of the $n$-dimensional polydisk is equivalent to
   $\sqrt{(\log n)/n}$},
   journal={Adv. Math.},
   volume={264},
   date={2014},
   pages={726--746},
   doi={10.1016/j.aim.2014.07.029},
}

\bib{Blas}{article}{
  author={Blasco, O.},
  title={The Bohr radius of a Banach space},
  conference={title={Vector measures, integration and related topics}, },
  book={ series={Oper. Theory Adv. Appl.}, volume={201}, publisher={Birkh\"{a}user Verlag}, place={Basel}, },
  date={2010},
  pages={59--64},
}

\bib{Boas}{article}{
  author={Boas, H.P.},
  title={The football player and the infinite series},
  journal={Notices Amer. Math. Soc.},
  volume={44},
  date={1997},
  number={11},
  pages={1430--1435},
}

\bib{bh}{article}{
  author={Bohnenblust, H.F.},
  author={Hille, E.},
  title={On the absolute convergence of Dirichlet series},
  journal={Ann. of Math. (2)},
  volume={32},
  date={1931},
  number={3},
  pages={600--622},
}

\bib{bom}{article}{
  author={Bombieri, E.},
  author={Bourgain, J.},
  title={A remark on Bohr's inequality},
  journal={Int. Math. Res. Not.},
  date={2004},
  number={80},
  pages={4307--4330},
}

\bib{LAA2015}{article}{
   author={Campos, J. R.},
   author={Jim\'{e}nez-Rodr{\'{\i}}guez, P.},
   author={Mu\~{n}oz-Fern\'{a}ndez, G.A.},
   author={Pellegrino, D.},
   author={Seoane-Sep\'{u}lveda, J.B.},
   title={On the real polynomial Bohnenblust-Hille inequality},
   journal={Linear Algebra Appl.},
   volume={465},
   date={2015},
   pages={391--400},
   doi={10.1016/j.laa.2014.09.040},
}

\bib{Choi}{article}{
  author={Choi, Y.S.},
  author={Kim, S.G.},
  title={The unit ball of $\scr P(^2\!l^2_2)$},
  journal={Arch. Math. (Basel)},
  volume={71},
  date={1998},
  number={6},
  pages={472--480},
}

\bib{fr}{article}{
  author={Defant, A.},
  author={Frerick, L.},
  title={Hypercontractivity of the Bohnenblust-Hille inequality for polynomials and multidimensional Bohr radii},
  status={arXiv:0903.3395},
}

\bib{annals2011}{article}{
  author={Defant, A.},
  author={Frerick, L.},
  author={Ortega-Cerd\`{a}, J.},
  author={Ouna{\"{\i }}es, M.},
  author={Seip, K.},
  title={The Bohnenblust-Hille inequality for homogeneous polynomials is hypercontractive},
  journal={Ann. of Math. (2)},
  volume={174},
  date={2011},
  number={1},
  pages={485--497},
}

\bib{dgm}{article}{
  author={Defant, A.},
  author={Garc{\'{\i }}a, D.},
  author={Maestre, M.},
  title={Bohr's power series theorem and local Banach space theory},
  journal={J. Reine Angew. Math.},
  volume={557},
  date={2003},
  pages={173--197},
}

\bib{ff}{article}{
  author={Defant, A.},
  author={Garc{\'{\i }}a, D.},
  author={Maestre, M.},
  author={Sevilla-Peris, P.},
  title={Bohr's strips for Dirichlet series in Banach spaces},
  journal={Funct. Approx. Comment. Math.},
  volume={44},
  date={2011},
  number={part 2},
  part={part 2},
  pages={165--189},
}

\bib{dmp}{article}{
  author={Defant, A.},
  author={Maestre, M.},
  author={Prengel, C.},
  title={The arithmetic Bohr radius},
  journal={Q. J. Math.},
  volume={59},
  date={2008},
  number={2},
  pages={189--205},
}

\bib{defant55}{article}{
  author={Defant, A.},
  author={Maestre, M.},
  author={Schwarting, U.},
  title={Bohr radii of vector valued holomorphic functions},
  journal={Adv. Math.},
  volume={231},
  date={2012},
  number={5},
  pages={2837--2857},
}

\bib{defant2}{article}{
  author={Defant, A.},
  author={Sevilla-Peris, P.},
  title={A new multilinear insight on Littlewood's 4/3-inequality},
  journal={J. Funct. Anal.},
  volume={256},
  date={2009},
  number={5},
  pages={1642--1664},
}

\bib{DD}{article}{
  author={Diniz, D.},
  author={Mu\~{n}oz-Fern\'{a}ndez, G. A.},
  author={Pellegrino, D.},
  author={Seoane-Sep\'{u}lveda, J. B.},
  title={The asymptotic growth of the constants in the Bohnenblust-Hille inequality is optimal},
  journal={J. Funct. Anal.},
  volume={263},
  date={2012},
  number={2},
  pages={415--428},
  doi={10.1016/j.jfa.2012.04.014},
}

\bib{diniz2}{article}{
  author={Diniz, D.},
  author={Mu\~{n}oz-Fern\'{a}ndez, G.A.},
  author={Pellegrino, D.},
  author={Seoane-Sep\'{u}lveda, J.B.},
  title={Lower bounds for the constants in the Bohnenblust-Hille inequality: the case of real scalars},
  journal={Proc. Amer. Math. Soc.},
  volume={142},
  date={2014},
  number={2},
  pages={575--580},
  doi={10.1090/S0002-9939-2013-11791-0},
}

\bib{GMS}{article}{
  author={Grecu, B. C.},
  author={Mu\~{n}oz-Fern\'{a}ndez, G. A.},
  author={Seoane-Sep\'{u}lveda, J. B.},
  title={Unconditional constants and polynomial inequalities},
  journal={J. Approx. Theory},
  volume={161},
  date={2009},
  number={2},
  pages={706--722},
}

\bib{Litt}{article}{
  author={Littlewood, J. E.},
  title={On bounded bilinear forms in an infinite number of variables},
  journal={Quart. J. Math. Oxford},
  volume={1},
  date={1930},
  pages={164--174},
}

\bib{monta}{article}{
  author={Montanaro, A.},
  title={Some applications of hypercontractive inequalities in quantum information theory},
  journal={J. Math. Physics},
  volume={53},
  date={2012},
}

\bib{lama11}{article}{
  author={Mu\~{n}oz-Fern\'{a}ndez, G.A.},
  author={Pellegrino, D.},
  author={Seoane-Sep\'{u}lveda, J.B.},
  title={Estimates for the asymptotic behaviour of the constants in the Bohnenblust-Hille inequality},
  journal={Linear Multilinear Algebra},
  volume={60},
  date={2012},
  number={5},
  pages={573--582},
  doi={10.1080/03081087.2011.613833},
}

\bib{Nu3}{article}{
   author={N\'{u}\~{n}ez-Alarc\'{o}n, D.},
   title={A note on the polynomial Bohnenblust-Hille inequality},
   journal={J. Math. Anal. Appl.},
   volume={407},
   date={2013},
   number={1},
   pages={179--181},
   doi={10.1016/j.jmaa.2013.05.008},
}

\bib{addqq}{article}{
  author={Nu\~{n}ez-Alarc\'{o}n, D.},
  author={Pellegrino, D.},
  author={Seoane-Sep\'{u}lveda, J.B.},
  title={On the Bohnenblust-Hille inequality and a variant of Littlewood's 4/3 inequality},
  journal={J. Funct. Anal.},
  volume={264},
  date={2013},
  pages={326--336},
}

\bib{Nun2}{article}{
  author={Nu\~{n}ez-Alarc\'{o}n, D.},
  author={Pellegrino, D.},
  author={Seoane-Sep\'{u}lveda, J.B.},
  author={Serrano-Rodr{\'{\i}}guez, D.M.},
  title={There exist multilinear Bohnenblust-Hille constants $(C_n)_{n=1}^\infty$ with $\lim_{n\rightarrow\infty}(C_{n+1}-C_n)=0$},
  journal={J. Funct. Anal.},
  volume={264},
  date={2013},
  number={2},
  pages={429--463},
  doi={10.1016/j.jfa.2012.11.006},
}

\bib{psseo}{article}{
  author={Pellegrino, D.},
  author={Seoane-Sep\'{u}lveda, J.B.},
  title={New upper bounds for the constants in the Bohnenblust-Hille inequality},
  journal={J. Math. Anal. Appl.},
  volume={386},
  date={2012},
  number={1},
  pages={300--307},
  doi={10.1016/j.jmaa.2011.08.004},
}

\bib{seip3}{article}{
  author={Seip, K.},
  title={Estimates for Dirichlet polynomials},
  journal={EMS Lecturer, CRM},
  date={20--23 February 2012},
  pages={http://www.euro-math-soc.eu/system/files/Seip\_CRM.pdf},
}
\end{biblist}
\end{bibdiv}

\end{document}